\documentclass[11pt,oneside,reqno]{amsart}
\usepackage[all]{xy}
\usepackage{amsfonts,amsmath,oldgerm,amssymb,amscd, comment,multirow}
\UseComputerModernTips
\usepackage[breaklinks]{hyperref}
\allowdisplaybreaks

\usepackage{enumerate}

\usepackage{caption}
\newtheorem{theorem}{Theorem}[section]

\newtheorem{lemma}[theorem]{{\bf Lemma}}
\newtheorem{coro}[theorem]{{\bf Corollary}}

\newtheorem{remark}[theorem]{Remark}

\newcommand{\N}{\mbox{$\mathbb N$}}
\newcommand{\R}{\mbox{$\mathbb R$}}
\newcommand{\C}{\mbox{$\mathbb C$}}

\begin{document}

\title[On the Moments of Least $r$-Gaps of Partitions and a Conjecture of Baruah and Talukdar]
{On the Moments of Least $r$-Gaps of Partitions and a Conjecture of Baruah and Talukdar}

\author[S. Bhowmick]{S. Bhowmick}
\address{Sourav Bhowmick, Department of Mathematics, National Institute of Technology, Raipur, Chhattisgarh 492010.}
\email{souravbhowmick578@gmail.com, sbhowmick.phd2025.maths@nitrr.ac.in}

\author[N. K. Meher]{N. K. Meher}
\address{Nabin Kumar Meher, Department of Mathematics, National Institute of Technology, Raipur, Chhattisgarh 492010.}
\email{mehernabin@gmail.com, nkmeher.maths@nitrr.ac.in}

\thanks{2020 Mathematics Subject Classification: Primary 11P82, 11P81, Secondary 05A17 \\
	Keywords: minimal excludant, least $r$-gap, $k$-th moments, Hardy--Ramanujan asymptotics, Ingham's Tauberian theorem. \\}

\maketitle
\pagenumbering{arabic}
\pagestyle{headings}

\begin{abstract}
The minimal excludant or mex of a partition, introduced by Andrews and Newman \cite{AN2019,AN2020}, is the smallest positive integer missing from that partition. Baruah, Bhoria, Eyyunni and Maji \cite{BBEM2023} studied the sum of mex split according to parity, together with its $k$-th moments. Ballantine and Merca \cite{BM2020} generalized mex to the least $r$-gap, which is the smallest natural number that does not appear at least $r$ times in the partition. Baruah and Talukdar \cite{BT2026} conjectured a corresponding asymptotic equivalence between the sums of odd and even least $r$-gaps for every $r>1$, generalizing a theorem of Barman and Singh \cite{BS2024} for the classical mex. In this article, we derive exact formulas for the $k$-th moments of $r\text{-}\mathrm{mex}(\pi)$ for every fixed $k\geq1$, in terms of partition functions. We give a complete proof of the conjecture of Baruah and Talukdar \cite{BT2026} for every natural number $r>1$. We also generalize an identity of Hopkins, Sellers and Stanton to the least $r$-gap setting.
\end{abstract}

\section{Introduction}

Let $n$ be a non-negative integer, let $p(n)$ be number of partitions of $n$ and $\mathcal P(n)$ denote the set of all partitions of $n$, with the usual convention $p(0)=1$ and $p(n)=0$ for $n<0$. For a positive integer $k$ and complex number $q$ with $\left|q\right|<1$, we write $f_k:=(q^k;q^k)_\infty=\prod_{i\geq1}(1-q^{ki})$.

For a partition $\pi\in\mathcal P(n)$, the \emph{minimal excludant} $\mathrm{mex}(\pi)$ is the smallest positive integer that is not a part of $\pi$. This statistic was revived by Andrews and Newman \cite{AN2019,AN2020}, who studied the total sum \[\sigma\mathrm{mex}(n):=\sum_{\pi\in\mathcal P(n)}\mathrm{mex}(\pi)\] and proved the identity
\begin{equation}\label{eq1}
	\sum_{n\geq0}\sigma\mathrm{mex}(n)q^n=(-q;q)_\infty^2=\sum_{n\geq0}D_2(n)q^n,
\end{equation}
where $D_2(n)$ is the number of partitions of $n$ into distinct parts with two colors. The same identity, under the name ``smallest gap'', had earlier been found by Grabner and Knopfmacher \cite{GK2006}, and was later given a purely combinatorial proof by Ballantine and Merca \cite{BM2021}. 

Asymptotic formulas for partition functions and related $q$-products
have attracted considerable attention in the literature. A fundamental
result in this direction is due to Hardy and Ramanujan \cite{HR1918}, who
used the circle method to determine the asymptotic growth of $p(n)$. More precisely,
\begin{equation}
	p(n) \sim
	\frac{1}{4\sqrt{3}\,n}
	\exp\left(\pi\sqrt{\frac{2n}{3}}\right),
	\qquad n\to\infty.
\end{equation}

Grabner and Knopfmacher \cite{GK2006} also proved the Hardy--Ramanujan-type asymptotic formula
\begin{equation}\label{eq2}
	\sigma\mathrm{mex}(n)\sim\frac14\big(6n^3\big)^{-1/4}\exp\!\left(\pi\sqrt{\frac{2n}3}\right), \qquad n\to\infty.
\end{equation}

Splitting according to the parity of the excludant, Baruah, Bhoria, Eyyunni and Maji \cite{BBEM2023} defined the following two functions:
\begin{equation}\label{eq3}
	\sigma_o\mathrm{mex}(n):=\sum_{\substack{\pi\in\mathcal P(n)\\ 2\nmid\mathrm{mex}(\pi)}}\mathrm{mex}(\pi),
\end{equation}
\begin{equation}\label{eq4}
	\sigma_e\mathrm{mex}(n):=\sum_{\substack{\pi\in\mathcal P(n)\\ 2\mid\mathrm{mex}(\pi)}}\mathrm{mex}(\pi),
\end{equation}
and proved the refinement
\begin{equation}\label{eq5}
	\sum_{n\geq0}\sigma_o\mathrm{mex}(n)q^n=\frac{(-q;q)_\infty^2+(q;q)_\infty^2}2,
\end{equation}
\begin{equation}\label{eq6}
	\sum_{n\geq0}\sigma_e\mathrm{mex}(n)q^n=\frac{(-q;q)_\infty^2-(q;q)_\infty^2}2.
\end{equation} In their concluding remarks, they asked for Hardy--Ramanujan-type asymptotic formulae for $\sigma_o\mathrm{mex}(n)$ and $\sigma_e\mathrm{mex}(n)$ and for their moments.

In a subsequent work, Barman and Singh \cite{BS2024} derived the following asymptotic behavior, of Hardy--Ramanujan type, for the arithmetic functions $\sigma_{o}\mathrm{mex}(n)$ and $\sigma_{e}\mathrm{mex}(n)$ as $n$ grows large:

\begin{equation}\label{barman}
	\sigma_{o}\mathrm{mex}(n) \sim \sigma_{e}\mathrm{mex}(n) \sim \frac{1}{8\sqrt[4]{6n^{3}}}\exp\left(\pi\sqrt{\frac{2n}{3}}\right), \qquad \text{as } n\to\infty.
\end{equation}
Ballantine and Merca \cite{BM2020} generalized $\mathrm{mex}(\pi)$ to the $r\text{-}\mathrm{mex}(\pi)$, which is the least positive integer that does not occur at least $r$ times as a part of $\pi$ (so $1\text{-}\mathrm{mex}(\pi)=\mathrm{mex}(\pi)$).

	\textbf{Example.} Take $\pi = (3,2,2,1,1,1)$, a partition of $n=10$, then
	$1\text{-}\mathrm{mex}(\pi)=4$, $2\text{-}\mathrm{mex}(\pi)=3$, $3\text{-}\mathrm{mex}(\pi)=2$, $ r\text{-}\mathrm{mex}(\pi)=1\ (r\ge 4)$.
	
 Writing $\sigma_r\mathrm{mex}(n):=\sum_{\pi\in\mathcal P(n)}r\text{-}\mathrm{mex}(\pi)$, Ballantine and Merca \cite{BM2020} found the generating function
\begin{equation}\label{merca}
 	\sum_{n\geq0}\sigma_{r}\mathrm{mex}(n)\,q^{n}=\frac{f_{2r}^{2}}{f_{1}f_{r}},
 \end{equation}
 which was re-derived by Baruah and Talukdar \cite{BT2026}. Also they obtained the generating functions for $\sigma_{r,o}\mathrm{mex}(n)$, $\sigma_{r,e}\mathrm{mex}(n)$, which are least r-gap analogues of $\sigma_{o}\mathrm{mex}(n), \sigma_{e}\mathrm{mex}(n)$ respectively.
 	\begin{equation}\label{talukdar2}
 		\sum_{n=0}^{\infty}\sigma_{r,o}\mathrm{mex}(n)q^{n} = \frac{1}{2}\big(\mathcal{M}_{r}(q)+\mathcal{N}_{r}(q)\big),
 	\end{equation}
 	\begin{equation}\label{talukdar3}
 		\sum_{n=0}^{\infty}\sigma_{r,e}\mathrm{mex}(n)q^{n} = \frac{1}{2}\big(\mathcal{M}_{r}(q)-\mathcal{N}_{r}(q)\big),
 	\end{equation}
 	\emph{where}
 	\begin{equation}\label{talukdar4}
 		\mathcal{M}_{r}(q) = \sum_{n=0}^{\infty}\beta_{r}(n)q^{n} = \frac{f_{2r}^{2}}{f_{1}f_{r}}
 	\end{equation}
 	\emph{and}
 	\begin{equation}\label{talukdar5}
 		\mathcal{N}_{r}(q) = \sum_{n=0}^{\infty}\gamma_{r}(n)q^{n} = \frac{f_{r}^{3}}{f_{1}}.
 	\end{equation}
 Baruah and Talukdar \cite{BT2026} also proved the asymptotic formula for $\sigma_r\mathrm{mex}(n)$
 \begin{equation}\label{talukdar1}
 	 \sigma_r\mathrm{mex}(n)\sim\frac14(6n^3r^2)^{-1/4}\exp(\pi\sqrt{2n/3}) \qquad \text{as } n\to\infty.
\end{equation}
Baruah and Talukdar \cite{BT2026} conjectured that \eqref{barman} extends to every fixed $r>1$.
Let us define 
\begin{equation}\label{eq8}
	\sigma^{(k)}_r\mathrm{mex}(n):=\sum_{\pi\in\mathcal P(n)}\big(r\text{-}\mathrm{mex}(\pi)\big)^k,
\end{equation}
\begin{equation}\label{eq9}
	\overline\sigma^{(k)}_r\mathrm{mex}(n):=\sum_{\pi\in\mathcal P(n)}(-1)^{r\text{-}\mathrm{mex}(\pi)-1}\big(r\text{-}\mathrm{mex}(\pi)\big)^k,
\end{equation}
\begin{equation}\label{eq10}
	\sigma^{(k)}_{r,o}\mathrm{mex}(n):=\sum_{\substack{\pi\in\mathcal P(n)\\ 2\nmid r\text{-}\mathrm{mex}(\pi)}}\big(r\text{-}\mathrm{mex}(\pi)\big)^k,
\end{equation}
\begin{equation}\label{eq11}
	\sigma^{(k)}_{r,e}\mathrm{mex}(n):=\sum_{\substack{\pi\in\mathcal P(n)\\ 2\mid r\text{-}\mathrm{mex}(\pi)}}\big(r\text{-}\mathrm{mex}(\pi)\big)^k.
\end{equation}
For $k=1$ these reduce to $\sigma_r\mathrm{mex}(n)$, $\sigma_{r,o}\mathrm{mex}(n)$, $\sigma_{r,e}\mathrm{mex}(n)$, at $r=1$ they reduce to the functions of Andrews--Newman \cite{AN2019,AN2020} and Baruah et al.\cite{BBEM2023} above.

 Let $p_{r\text{-}\mathrm{mex}}(m,n)$ denote the number of partitions of $n$ with $r\text{-}\mathrm{mex}(\pi)=m$, and the generalized triangular number be
\[
t_r(m):=r\cdot\frac{m(m+1)}2, \qquad m\geq0.
\]
We now state all the results of this paper. Their proofs are collected in Section~3, and all auxiliary lemmas used in the proofs are collected in Section~2.

 Our first result is the least-$r$-gap generalization of an equation for $p_{r\text{-}\mathrm{mex}}(m,n)$ that goes back to Baruah--Bhoria--Eyyunni--Maji \cite{BBEM2023}, at $r=1$.

\begin{theorem}\label{prop.gen}
	For fixed $n\geq0$ and $z\in\C$,
	\[
	\sum_{m\geq1}p_{r\text{-}\mathrm{mex}}(m,n)z^m=p(n)+(z-1)\sum_{m\geq0}p\big(n-t_r(m)\big)z^m.
	\]
\end{theorem}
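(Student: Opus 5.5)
The identity follows from a tail-count (Abel summation) argument. The key quantity is the number of partitions whose least $r$-gap exceeds $m$. For $m\ge 0$, let $N_m(n)$ denote the number of partitions $\pi\in\mathcal P(n)$ with $r\text{-}\mathrm{mex}(\pi)>m$. Equivalently, each of $1,2,\dots,m$ occurs at least $r$ times in $\pi$.

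We first show that $N_m(n)=p(n-t_r(m))$. Given such a $\pi$, remove exactly $r$ copies of each part $j\in\{1,\dots,m\}$. This deletes total weight $r(1+2+\cdots+m)=t_r(m)$ and leaves an arbitrary partition of $n-t_r(m)$. The inverse map adjoins $r$ copies of each of $1,\dots,m$, so the correspondence is a bijection onto $\mathcal P(n-t_r(m))$. With the convention $p(k)=0$ for $k<0$, the formula also covers the case where no such partition exists. For $m=0$ the condition is vacuous, and $N_0(n)=p(n)$, consistent with $t_r(0)=0$.

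Next, since $r\text{-}\mathrm{mex}(\pi)\ge1$ always, we have $p_{r\text{-}\mathrm{mex}}(m,n)=N_{m-1}(n)-N_m(n)$ for $m\ge1$. Both sums are finite for fixed $n$, since $N_m(n)=0$ once $t_r(m)>n$. Substituting gives
\[
\sum_{m\ge1}\bigl(N_{m-1}(n)-N_m(n)\bigr)z^m
= z\sum_{m\ge0}N_m(n)z^m-\sum_{m\ge1}N_m(n)z^m
= N_0(n)+(z-1)\sum_{m\ge0}N_m(n)z^m .
\]
Inserting $N_m(n)=p(n-t_r(m))$ and $N_0(n)=p(n)$ yields the statement.

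Essentially no step here is hard. The only point requiring care is the bookkeeping of the $m=0$ term in the telescoping, which is where the separate $p(n)$ appears. One should also note explicitly that the sums terminate, so the identity holds for all $z\in\C$ and not merely as a formal power series identity.
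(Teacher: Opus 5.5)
Your proof is correct and is essentially the paper's argument: the paper quotes from Baruah--Talukdar the generating function $\frac{1}{f_1}\sum_{m\geq1}z^mq^{t_r(m-1)}(1-q^{rm})$, performs the same telescoping re-indexing on the $q$-series, and then compares coefficients of $q^n$. The only difference is that your bijection $N_m(n)=p\big(n-t_r(m)\big)$, obtained by removing $r$ copies of each of $1,\dots,m$, proves that input directly at fixed $n$, so your version is self-contained instead of relying on the cited generating function.
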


\begin{theorem}\label{thm.Dz}
	For $k,n\in\N$, $r\geq1$ and $z\in\C$,
	\[
	\sum_{m\geq1}p_{r\text{-}\mathrm{mex}}(m,n)\,m^k z^m=\sum_{m\geq0}\Big[(m+1)^kz^{m+1}-m^kz^m\Big]\,p\big(n-t_r(m)\big).
	\]
\end{theorem}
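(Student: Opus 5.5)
The plan is to derive Theorem~\ref{thm.Dz} from Theorem~\ref{prop.gen} by first rewriting the right-hand side of Theorem~\ref{prop.gen} as a telescoping sum, and then applying the operator $\left(z\frac{d}{dz}\right)^k$ term by term. Since $n$ is fixed, only finitely many $m$ satisfy $t_r(m)\le n$. All sums are therefore finite polynomials in $z$, and termwise differentiation needs no justification.

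\textbf{Step 1 (rewriting).} Because $t_r(0)=0$, we have $p(n)=p(n-t_r(0))$. Expanding $(z-1)\sum_{m\ge0}p(n-t_r(m))z^m$ gives
\[
\sum_{m\ge0}p(n-t_r(m))z^{m+1}-\sum_{m\ge0}p(n-t_r(m))z^m .
\]
Adding $p(n)$ cancels the $m=0$ term $p(n)z^0$ of the second sum. Hence Theorem~\ref{prop.gen} can be written as
\[
\sum_{m\ge1}p_{r\text{-}\mathrm{mex}}(m,n)z^m=\sum_{m\ge0}\big[z^{m+1}-z^m\big]p\big(n-t_r(m)\big),
\]
where in the second bracket the $m=0$ term $z^0$ may be kept, since $z\frac{d}{dz}$ kills constants anyway.

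\textbf{Step 2 (differentiation).} Apply $\theta^k$ with $\theta=z\frac{d}{dz}$, using $\theta^k z^j=j^k z^j$. On the left this gives $\sum_{m\ge1}p_{r\text{-}\mathrm{mex}}(m,n)m^kz^m$. On the right it gives $\sum_{m\ge0}\big[(m+1)^kz^{m+1}-m^kz^m\big]p(n-t_r(m))$. This uses $k\ge1$: the constant term $p(n)$ of Theorem~\ref{prop.gen} is annihilated, which is consistent with $0^k=0$.

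\textbf{Main obstacle.} The only delicate point is the bookkeeping of the $m=0$ and constant terms. One must check that the $p(n)$ term combines correctly and that $k\ge1$ makes the $0^k$ convention harmless. Both identities hold as polynomial identities in $z$, so they are valid for all $z\in\C$.
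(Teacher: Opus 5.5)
Your proposal is correct and matches the paper's proof: apply $(z\,\partial_z)^k$ termwise to the finite polynomial identity of Theorem~\ref{prop.gen}, where the constant $p(n)$ is annihilated because $k\ge1$. One small correction: your displayed identity in Step~1 holds only modulo the constant $p(n)$ (at $z=1$ its left side is $p(n)$ and its right side is $0$), so state it as an identity up to constants rather than as an equality; this does not affect Step~2.
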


\begin{theorem}\label{thmA}
	For $k,n\in\N$ and $r\geq1$,
	\begin{equation}\label{eq12}
		\sigma^{(k)}_r\mathrm{mex}(n)=\sum_{m\geq0}\big[(m+1)^k-m^k\big]\,p\big(n-t_r(m)\big),
	\end{equation}
	\begin{equation}\label{eq13}
		\overline\sigma^{(k)}_r\mathrm{mex}(n)=\sum_{m\geq0}(-1)^m\big[(m+1)^k+m^k\big]\,p\big(n-t_r(m)\big).
	\end{equation}
\end{theorem}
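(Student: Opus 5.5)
Theorem~\ref{thmA} follows by specializing Theorem~\ref{thm.Dz} at $z=1$ and $z=-1$. First we note that all sums involved are finite for fixed $n$. Since $p(n-t_r(m))=0$ once $t_r(m)>n$, only finitely many $m$ contribute on the right-hand side. On the left, $p_{r\text{-}\mathrm{mex}}(m,n)=0$ for $m$ large: if $r\text{-}\mathrm{mex}(\pi)=m$, then each of $1,\dots,m-1$ appears at least $r$ times in $\pi$, so $n\ge t_r(m-1)$. Hence both sides of Theorem~\ref{thm.Dz} are polynomials in $z$, and we may substitute any complex value without convergence issues.

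By definition, $\sigma^{(k)}_r\mathrm{mex}(n)=\sum_{\pi\in\mathcal P(n)}(r\text{-}\mathrm{mex}(\pi))^k$. Grouping partitions by the value $m=r\text{-}\mathrm{mex}(\pi)\ge1$ gives
\[
\sigma^{(k)}_r\mathrm{mex}(n)=\sum_{m\ge1}p_{r\text{-}\mathrm{mex}}(m,n)\,m^k ,
\]
which is the left side of Theorem~\ref{thm.Dz} at $z=1$. Setting $z=1$ on the right side produces $\sum_{m\ge0}[(m+1)^k-m^k]\,p(n-t_r(m))$, and this is \eqref{eq12}.

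For \eqref{eq13}, the same grouping gives
\[
\overline\sigma^{(k)}_r\mathrm{mex}(n)=\sum_{m\ge1}(-1)^{m-1}p_{r\text{-}\mathrm{mex}}(m,n)\,m^k ,
\]
which is $-1$ times the left side of Theorem~\ref{thm.Dz} at $z=-1$. On the right side at $z=-1$, the bracket becomes
\[
(m+1)^k(-1)^{m+1}-m^k(-1)^m=-(-1)^m\big[(m+1)^k+m^k\big].
\]
Multiplying through by $-1$ yields exactly \eqref{eq13}.

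There is no substantial obstacle here. The only points needing care are the finiteness justification above and getting the sign right in the $z=-1$ substitution.
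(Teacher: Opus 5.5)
Your proposal is correct and matches the paper's proof: you specialize Theorem~\ref{thm.Dz} at $z=1$ and $z=-1$, and you handle the sign through $(m+1)^k(-1)^{m+1}-m^k(-1)^m=-(-1)^m\big[(m+1)^k+m^k\big]$, exactly as the paper does. Your finiteness remark, that $n\ge t_r(m-1)$ whenever $r\text{-}\mathrm{mex}(\pi)=m$, is a slightly sharper version of the bound $r\text{-}\mathrm{mex}(\pi)\le n+1$ used in the paper's proof of Theorem~\ref{thm.Dz}, and it is valid.
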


\begin{coro}\label{coro.k1}
	For $r\geq1$ and $n\geq1$,
	\[
	\sigma_r\mathrm{mex}(n)=\sum_{m\geq0}p\big(n-t_r(m)\big),
	\]
	\[
	\overline\sigma_r\mathrm{mex}(n)=\sum_{m\geq0}(-1)^m(2m+1)\,p\big(n-t_r(m)\big).
	\]
\end{coro}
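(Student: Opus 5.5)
The corollary is the case $k=1$ of Theorem~\ref{thmA}, so I would simply specialize both formulas there, taking Theorem~\ref{thmA} as given.

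\textbf{First identity.} Put $k=1$ in \eqref{eq12}. The weight $(m+1)^k-m^k$ becomes $(m+1)-m=1$. Moreover $\sigma^{(1)}_r\mathrm{mex}(n)$ is, by \eqref{eq8}, exactly $\sigma_r\mathrm{mex}(n)=\sum_{\pi\in\mathcal P(n)} r\text{-}\mathrm{mex}(\pi)$. This gives $\sigma_r\mathrm{mex}(n)=\sum_{m\ge0}p(n-t_r(m))$.

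\textbf{Second identity.} Put $k=1$ in \eqref{eq13}. The weight $(-1)^m[(m+1)^k+m^k]$ becomes $(-1)^m(2m+1)$. Here $\overline\sigma_r\mathrm{mex}(n)$ means $\overline\sigma^{(1)}_r\mathrm{mex}(n)$, i.e.\ the signed sum $\sum_{\pi}(-1)^{r\text{-}\mathrm{mex}(\pi)-1}\,r\text{-}\mathrm{mex}(\pi)$. I would state this reading explicitly, since the paper introduces the notation only through \eqref{eq9}.

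\textbf{Finiteness.} Each sum is finite: $t_r(m)\to\infty$ and $p(j)=0$ for $j<0$, so only $m$ with $t_r(m)\le n$ contribute. The restriction $n\ge1$ is harmless, because Theorem~\ref{thmA} holds for $n\in\N$.

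\textbf{Sanity check.} At $r=1$ the first formula gives $\sigma\mathrm{mex}(n)=\sum_m p(n-m(m+1)/2)$. This agrees with \eqref{eq1} via the identity $(-q;q)_\infty^2=\frac{1}{(q;q)_\infty}\sum_{m\ge0}q^{m(m+1)/2}$, which is a Gauss identity, and it could be mentioned as a remark.

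There is no real obstacle. The only point needing care is the notational identification of $\overline\sigma_r\mathrm{mex}(n)$ with the $k=1$ case of \eqref{eq9}, including its sign convention.
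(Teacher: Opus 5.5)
Your proposal is correct and is exactly the paper's argument: specialize both identities of Theorem~\ref{thmA} at $k=1$, using $(m+1)-m=1$ and $(-1)^m[(m+1)+m]=(-1)^m(2m+1)$. Your extra remarks are accurate but not needed for the proof; these are the finiteness of the sums, the reading of $\overline\sigma_r\mathrm{mex}(n)$ as the $k=1$ case of \eqref{eq9}, and the $r=1$ check via Gauss's identity.
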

\begin{theorem}\label{thm.split}
	 For positive integers $k,n$ and $r\geq1$, we have
	\begin{equation}\label{eq14}
		\sigma^{(k)}_{r,o}\mathrm{mex}(n)=\sum_{m\geq0}\Big[\epsilon_m(m+1)^k-(1-\epsilon_m)m^k\Big]\,p\big(n-t_r(m)\big),
	\end{equation}
	\begin{equation}\label{eq15}
		\sigma^{(k)}_{r,e}\mathrm{mex}(n)=\sum_{m\geq0}\Big[(1-\epsilon_m)(m+1)^k-\epsilon_mm^k\Big]\,p\big(n-t_r(m)\big),
	\end{equation}
	\emph{where} $\epsilon_m=\frac{1+(-1)^m}2$. 
\end{theorem}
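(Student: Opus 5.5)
The plan is to derive Theorem~\ref{thm.split} from Theorem~\ref{thm.Dz} by evaluating it at $z=1$ and $z=-1$ and combining the two results. By definition,
\[
\sigma^{(k)}_{r,o}\mathrm{mex}(n)=\sum_{m\geq1}\frac{1-(-1)^m}{2}\,m^k\,p_{r\text{-}\mathrm{mex}}(m,n),\qquad
\sigma^{(k)}_{r,e}\mathrm{mex}(n)=\sum_{m\geq1}\frac{1+(-1)^m}{2}\,m^k\,p_{r\text{-}\mathrm{mex}}(m,n).
\]
Consequently,
\[
\sigma^{(k)}_{r,o}\mathrm{mex}(n)=\tfrac12\big(\sigma^{(k)}_r\mathrm{mex}(n)+\overline\sigma^{(k)}_r\mathrm{mex}(n)\big),\qquad
\sigma^{(k)}_{r,e}\mathrm{mex}(n)=\tfrac12\big(\sigma^{(k)}_r\mathrm{mex}(n)-\overline\sigma^{(k)}_r\mathrm{mex}(n)\big).
\]
Here I use that $(-1)^{m-1}=-(-1)^m$, so the weight $(-1)^{r\text{-}\mathrm{mex}(\pi)-1}$ in \eqref{eq9} is $+1$ exactly when $r\text{-}\mathrm{mex}(\pi)$ is odd. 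The left sides are exactly the $z=1$ and $z=-1$ specialisations of Theorem~\ref{thm.Dz}, up to sign. Since $p(n-t_r(m))=0$ once $t_r(m)>n$, all sums are finite, so I may substitute into Theorem~\ref{thmA} and rearrange freely.

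Next I substitute \eqref{eq12} and \eqref{eq13} and collect the coefficient of $p(n-t_r(m))$ for each fixed $m\geq0$. For $\sigma^{(k)}_{r,o}\mathrm{mex}(n)$ this coefficient is
\[
\tfrac12\Big[(m+1)^k-m^k+(-1)^m\big((m+1)^k+m^k\big)\Big]
=\tfrac{1+(-1)^m}{2}(m+1)^k-\tfrac{1-(-1)^m}{2}m^k
=\epsilon_m(m+1)^k-(1-\epsilon_m)m^k .
\]
This gives \eqref{eq14}. The analogous computation with a minus sign gives
\[
\tfrac{1-(-1)^m}{2}(m+1)^k-\tfrac{1+(-1)^m}{2}m^k=(1-\epsilon_m)(m+1)^k-\epsilon_m m^k,
\]
which is \eqref{eq15}.

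There is no real obstacle here. The only point needing care is the sign convention in \eqref{eq9}, which must match the sign in \eqref{eq13}. To check it, I set $z=-1$ in Theorem~\ref{thm.Dz}. The left side becomes $\sum_{m} p_{r\text{-}\mathrm{mex}}(m,n)m^k(-1)^m=-\overline\sigma^{(k)}_r\mathrm{mex}(n)$. The right side becomes $\sum_{m}\big[(-1)^{m+1}(m+1)^k-(-1)^m m^k\big]p(n-t_r(m))=-\sum_m(-1)^m\big[(m+1)^k+m^k\big]p(n-t_r(m))$. These agree with \eqref{eq13}. I would also note that the $m=0$ term causes no trouble: $m^k=0$ for $k\geq1$, and $\epsilon_0=1$ yields the constant term $p(n)$ in \eqref{eq14}, as expected.

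As a final sanity check I would verify \eqref{eq14} and \eqref{eq15} against \eqref{talukdar2} and \eqref{talukdar3} at $k=1$, and against small values of $n$ computed by hand.
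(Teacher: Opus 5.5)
Your proposal is correct and follows the paper's own proof. Like the paper, you write $2\sigma^{(k)}_{r,o}\mathrm{mex}(n)$ and $2\sigma^{(k)}_{r,e}\mathrm{mex}(n)$ as the sum and the difference of the two identities of Theorem~\ref{thmA}, then collect the coefficient of $p(n-t_r(m))$ and rewrite it using $\epsilon_m$. Your coefficient computations are right, and your sign check of \eqref{eq9} against the $z=-1$ specialization of Theorem~\ref{thm.Dz} repeats what the paper already does in proving \eqref{eq13}.
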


\begin{coro}\label{coro.k1split}
	For $r\geq1$ and $n\geq0$,
	\[
	\sigma_{r,o}\mathrm{mex}(n)=\sum_{m\geq0}(2m+1)\Big[p\big(n-t_r(2m)\big)-p\big(n-t_r(2m+1)\big)\Big],
	\]
	\[
	\sigma_{r,e}\mathrm{mex}(n)=\sum_{m\geq0}(2m+2)\Big[p\big(n-t_r(2m+1)\big)-p\big(n-t_r(2m+2)\big)\Big],
	\]
	where $t_r(2m)=rm(2m+1)$ and $t_r(2m+1)=r(2m+1)(m+1)$. At $r=1$ this is Corollary~2.8 of Baruah--Bhoria--Eyyunni--Maji \cite{BBEM2023}.
\end{coro}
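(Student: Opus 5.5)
This corollary is the case $k=1$ of Theorem~\ref{thm.split}; the only work is regrouping the resulting series by parity of the summation index. With $k=1$ the coefficient in \eqref{eq14} is $\epsilon_m(m+1)-(1-\epsilon_m)m$. This equals $m+1$ when $m$ is even and $-m$ when $m$ is odd. Hence
\[
\sigma_{r,o}\mathrm{mex}(n)=\sum_{j\ge0}(2j+1)\,p\big(n-t_r(2j)\big)-\sum_{j\ge0}(2j+1)\,p\big(n-t_r(2j+1)\big).
\]
Here the even indices $m=2j$ give coefficient $2j+1$, and the odd indices $m=2j+1$ give coefficient $-(2j+1)$. Pairing these term by term gives the first identity.

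Similarly, for \eqref{eq15} with $k=1$ the coefficient $(1-\epsilon_m)(m+1)-\epsilon_m m$ equals $m+1$ for odd $m$ and $-m$ for even $m$. Odd $m=2j+1$ contributes $(2j+2)\,p(n-t_r(2j+1))$. Even $m=2j+2$ with $j\ge0$ contributes $-(2j+2)\,p(n-t_r(2j+2))$, and the $m=0$ term vanishes since its coefficient is $-0$. Grouping these gives the second identity.

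The remaining checks are routine:
\begin{itemize}
\item $t_r(2m)=r\cdot 2m(2m+1)/2=rm(2m+1)$ and $t_r(2m+1)=r(2m+1)(2m+2)/2=r(2m+1)(m+1)$.
\item All sums are finite for fixed $n$, since $p$ vanishes at negative arguments, so rearranging and splitting the series is legitimate.
\end{itemize}

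The only point needing attention is the range of $n$. Theorem~\ref{thm.split} is stated for positive $n$, while the corollary allows $n=0$. For $n=0$ the empty partition has $r$-mex equal to $1$. The right-hand sides give $1\cdot p(0)=1$ for the odd case, and $0$ for the even case because $t_r(1)=r>0$. This matches $\sigma_{r,o}\mathrm{mex}(0)=1$ and $\sigma_{r,e}\mathrm{mex}(0)=0$. At $r=1$, $t_1(m)$ is the ordinary triangular number, recovering the cited Corollary~2.8.
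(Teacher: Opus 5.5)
Your proposal is correct and follows the paper's proof essentially verbatim. Both set $k=1$ in Theorem~\ref{thm.split}, split the sum by the parity of $m$, reindex with $m=2j$, $m=2j+1$ (respectively $m=2j+1$, $m=2j+2$, the $m=0$ term being zero), and combine. Your separate verification of the case $n=0$ is a small extra the paper omits, since Theorem~\ref{thm.split} is stated only for positive $n$.
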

We can now state our resolution of Conjecture~29 of Baruah and Talukdar \cite{BT2026}.

\begin{theorem}\label{thmC}
	For every fixed positive integer $r>1$, as $n\to\infty$,
	\[
	\sigma_{r,o}\mathrm{mex}(n)\ \sim\ \sigma_{r,e}\mathrm{mex}(n)\ \sim\ \frac18\left(6n^3r^2\right)^{-1/4}\exp\!\left(\pi\sqrt{\frac{2n}3}\right).
	\]
\end{theorem}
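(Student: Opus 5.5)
Since $\sigma_{r,o}\mathrm{mex}(n)=\tfrac12(\beta_r(n)+\gamma_r(n))$ and $\sigma_{r,e}\mathrm{mex}(n)=\tfrac12(\beta_r(n)-\gamma_r(n))$ by \eqref{talukdar2}--\eqref{talukdar5}, and \eqref{talukdar1} already gives $\beta_r(n)=\sigma_r\mathrm{mex}(n)\sim\frac14(6n^3r^2)^{-1/4}\exp(\pi\sqrt{2n/3})$, the theorem reduces to the single estimate
\[
\gamma_r(n)=o\!\left(n^{-3/4}\exp\!\left(\pi\sqrt{\tfrac{2n}{3}}\right)\right),\qquad n\to\infty,
\]
where $\sum_n\gamma_r(n)q^n=f_r^3/f_1$. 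I would prove this by comparing $f_r^3/f_1$ with $1/f_1$ near $q=1$ and noting that $f_r^3$ decays there.

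For the analytic input, the modular transformation of $\eta$ gives, for $q=e^{-t}$ with $t\to0^+$ along the real axis,
\[
\log\frac{1}{f_1}=\frac{\pi^2}{6t}+\frac12\log\frac{t}{2\pi}+o(1),\qquad \log f_r^3=-\frac{\pi^2}{2rt}-\frac32\log\frac{rt}{2\pi}+o(1).
\]
Hence $F(q):=f_r^3/f_1\approx C\,t^{-1}\exp\big(\frac{\pi^2}{6t}(1-\frac3r)\big)$. For $r\ge 4$ the exponent $c_r:=\frac{\pi^2}{6}(1-\frac3r)$ is positive but strictly less than $\pi^2/6$. For $r=2,3$ it is $\le0$, so $F$ stays bounded by a power of $1/t$.

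The step I expect to be the main obstacle is turning this into a coefficient bound, because $\gamma_r(n)$ can be negative (by Jacobi, $f_r^3=\sum(-1)^j(2j+1)q^{rj(j+1)/2}$), so Ingham's Tauberian theorem does not apply directly to $\gamma_r$. I would avoid positivity altogether and write $\gamma_r(n)=\sum_{j\ge0}(-1)^j(2j+1)\,p\big(n-t_r(j)\big)$, which is exactly the $\overline\sigma_r$ formula of Corollary~\ref{coro.k1}. Then
\[
|\gamma_r(n)|\le\sum_{j\ge0}(2j+1)\,p\big(n-t_r(j)\big).
\]
The Hardy--Ramanujan bound gives $p(n-t)\ll n^{-1}\exp\!\big(\pi\sqrt{2/3}\,(\sqrt n-\tfrac{t}{2\sqrt n})\big)$ for $t\le n/2$, and terms with $t_r(j)>n/2$ are exponentially smaller. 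With $t=t_r(j)\approx rj^2/2$, the sum becomes
\[
n^{-1}e^{\pi\sqrt{2n/3}}\sum_j(2j+1)\,e^{-\alpha j^2/\sqrt n}\ll n^{-1}e^{\pi\sqrt{2n/3}}\cdot\sqrt n,
\]
with $\alpha>0$, which is only $O(n^{-1/2}e^{\pi\sqrt{2n/3}})$. That bound is too weak, so the absolute values lose the cancellation that has to be used.

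I would therefore instead evaluate the alternating sum asymptotically. With $p(n-t)\approx p(n)e^{-\lambda t}$, where $\lambda=\pi/\sqrt{6n}$, the sum becomes
\[
p(n)\sum_j(-1)^j(2j+1)e^{-\lambda r j(j+1)/2}=p(n)\,f_r^3\big|_{q=e^{-\lambda}},
\]
which is exponentially small in $1/\lambda\asymp\sqrt n$ by the transformation above. To make this rigorous, I would split the sum at $j\le n^{1/4+\delta}$ and use the Rademacher/Hardy--Ramanujan expansion $p(n-t)=p(n)e^{-\lambda t}\big(1+O(t^2n^{-3/2}+tn^{-1})\big)$. The main part then gives $p(n)$ times an exponentially small quantity. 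The error terms contribute $p(n)\,n^{-3/2}\sum(2j+1)t^2e^{-\lambda t}\ll p(n)n^{-3/2}\lambda^{-3}\asymp p(n)$, which is only borderline, so I would keep one further correction term (the linear-in-$t$ expansion term, again an alternating theta-derivative, itself exponentially small) to get the error to $p(n)n^{-1/2+\epsilon}$. Since $p(n)\asymp n^{-1}e^{\pi\sqrt{2n/3}}$, this gives $\gamma_r(n)=O\big(n^{-1}e^{\pi\sqrt{2n/3}}\big)=o\big(n^{-3/4}e^{\pi\sqrt{2n/3}}\big)$. Combining this with \eqref{talukdar1} gives both asymptotics, each equal to half of $\beta_r(n)$.

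An alternative route to the same estimate is the saddle-point/circle method applied directly to $F$: the major arc at $q\to1$ has growth rate $c_r<\pi^2/6$, and the other arcs are controlled by the growth of $1/f_1$. This gives $\gamma_r(n)\ll\exp(2\sqrt{c_r^+ n}+\epsilon\sqrt n)$, which is exponentially smaller than $e^{\pi\sqrt{2n/3}}$ for every $r>1$.
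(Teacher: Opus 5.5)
Your proof is correct, but it takes a genuinely different route from the paper.

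\textbf{The paper's route.} The paper never separates $\beta_r$ and $\gamma_r$ at the coefficient level. It proves by an explicit injection (Lemma~\ref{lem.mono1}) that $\sigma_{r,o}\mathrm{mex}(n)$ and $\sigma_{r,e}\mathrm{mex}(n)$ are weakly increasing. It then uses only the real-axis comparison $\mathcal N_r(e^{-y})=o(\mathcal M_r(e^{-y}))$ to get both generating functions $\sim\frac1{4\sqrt r}e^{\pi^2/(6y)}$, and applies Ingham's theorem (Theorem~\ref{thm.ingham}) to each sequence separately. The sign problem you identify for $\gamma_r(n)$ is exactly what the monotonicity lemma sidesteps: the Tauberian step only ever sees nonnegative increasing sequences.

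\textbf{Your route.} You write $\gamma_r(n)=\overline\sigma_r\mathrm{mex}(n)=\sum_j(-1)^j(2j+1)\,p(n-t_r(j))$ via Corollary~\ref{coro.k1}. You then extract the cancellation by hand from $p(n-t)=p(n)e^{-\lambda t}\bigl(1+O(t^2n^{-3/2}+tn^{-1})\bigr)$ together with the exponential smallness of $f_r(e^{-\lambda})^3$.

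\textbf{What each buys.} Your argument needs neither monotonicity nor a Tauberian theorem for the splitting, and it is quantitative: both sequences equal $\frac12\sigma_r\mathrm{mex}(n)+O(p(n))$. Running the same expansion with weight $1$ in place of $(-1)^j(2j+1)$ reproduces \eqref{talukdar1} from the first identity of Corollary~\ref{coro.k1}. So your method can give a completely Tauberian-free proof of the whole statement. The paper's argument is shorter, but it rests on Ingham's theorem in the real-axis-only form quoted there. That form is known (Bringmann, Jennings-Shaffer and Mahlburg) to need an additional growth bound for $C(e^{-z})$ with $z$ in a sector around the positive real axis. The bound holds here because $\mathcal M_r\pm\mathcal N_r$ are eta quotients, but the paper does not check it.

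\textbf{Minor points in your write-up.}

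First, the $O(p(n))$ error is not borderline. Since $p(n)\asymp n^{-1}e^{\pi\sqrt{2n/3}}$, it already beats the target $n^{-3/4}e^{\pi\sqrt{2n/3}}$ by a factor $n^{-1/4}$. The extra correction term is therefore unnecessary, and your final line in fact only uses $O(p(n))$.

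Second, the expansion of $p(n-t)$ is uniform only for $t=O(n^{3/4})$, so take $\delta<1/8$. Handle the range $j>n^{1/4+\delta}$ with your earlier bound $p(n-t)\ll n^{-1}e^{\pi\sqrt{2n/3}-\lambda t}$, which contributes a factor $e^{-cn^{2\delta}}$; the range $t>n/2$ is trivially exponentially smaller.

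Third, in the closing circle-method sketch, the minor arcs are not controlled by $1/f_1$ alone, since $|f_r|^3$ need not be bounded there. One would need the transformation of the full eta quotient at every cusp. Your main argument does not depend on this.
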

Our final result generalizes, to the least $r$-gap setting, the Hopkins--Sellers--Stanton-type identity established by Baruah--Bhoria--Eyyunni--Maji \cite{BBEM2023}. Let $o_1^{(r)}(n)$ (resp. $o_3^{(r)}(n)$) denote the number of $\pi\in\mathcal P(n)$ with $r\text{-}\mathrm{mex}(\pi)\equiv1$ (resp. $\equiv3$) modulo $4$.

\begin{theorem}\label{thmD}
	For every $r\geq1$,
	\[
	\sum_{n\geq0}\Big(o_1^{(r)}(n)-o_3^{(r)}(n)\Big)q^n=\frac{f_rf_{4r}}{f_1f_{2r}}.
	\]
\end{theorem}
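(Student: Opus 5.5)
The plan is to specialize Theorem~\ref{prop.gen} at $z=i$ and take imaginary parts. For $z=i$ the left-hand side of Theorem~\ref{prop.gen} is $\sum_{m\ge1}p_{r\text{-}\mathrm{mex}}(m,n)\,i^m$. The terms with $m$ even are real, those with $m\equiv1\pmod 4$ contribute $+i$, and those with $m\equiv3\pmod4$ contribute $-i$. Hence its imaginary part is exactly $o_1^{(r)}(n)-o_3^{(r)}(n)$. On the right-hand side $p(n)$ is real, so
\[
o_1^{(r)}(n)-o_3^{(r)}(n)=\operatorname{Im}\Big[(i-1)\sum_{m\ge0}i^m\,p\big(n-t_r(m)\big)\Big].
\]

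Next I split $\sum_{m\ge0}i^m p(n-t_r(m))=A_n+iB_n$. Here $A_n$ collects the even $m$ with sign $(-1)^{m/2}$, and $B_n$ collects the odd $m$ with sign $(-1)^{(m-1)/2}$. Then
\[
(i-1)(A_n+iB_n)=-(A_n+B_n)+i(A_n-B_n),
\]
so the imaginary part is $A_n-B_n$. Checking residues of $m$ modulo $4$, the coefficient of $p(n-t_r(m))$ in $A_n-B_n$ is $+1,-1,-1,+1$ for $m\equiv0,1,2,3$. This is precisely $(-1)^{m(m+1)/2}$. Therefore
\[
o_1^{(r)}(n)-o_3^{(r)}(n)=\sum_{m\ge0}(-1)^{m(m+1)/2}\,p\big(n-t_r(m)\big).
\]

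Multiplying by $q^n$, summing over $n$, and using $\sum_n p(n)q^n=1/f_1$, the generating function becomes $\frac{1}{f_1}\psi(-q^r)$. Here $\psi(q)=\sum_{m\ge0}q^{m(m+1)/2}=f_2^2/f_1$ is Ramanujan's theta function (Gauss's identity). It remains to evaluate $\psi(-q)$. The substitution $q\mapsto -q$ sends $f_1$ to $f_2^3/(f_1f_4)$ and fixes $f_2$. This gives
\[
\psi(-q)=\frac{f_2^2\,f_1f_4}{f_2^3}=\frac{f_1f_4}{f_2},
\]
and replacing $q$ by $q^r$ yields $\psi(-q^r)=f_rf_{4r}/f_{2r}$. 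Dividing by $f_1$ gives the claimed $\frac{f_rf_{4r}}{f_1f_{2r}}$.

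There is no real obstacle. The only delicate points are the bookkeeping of signs modulo $4$ (the identification with $(-1)^{m(m+1)/2}$) and the standard transformation of $f_1$ under $q\mapsto-q$, which follows from $(-q;-q)_\infty=(q^2;q^2)_\infty(-q;q^2)_\infty$.
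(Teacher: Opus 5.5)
Your proof is correct and is essentially the paper's argument: taking the imaginary part of Theorem~\ref{prop.gen} at $z=i$ is equivalent to the paper's step of subtracting the identity at $-z$ from the one at $z$ and then setting $z=i$, and your sign $(-1)^{m(m+1)/2}$ reproduces the paper's formula $\sum_{j\ge0}(-1)^j\big[p(n-t_r(2j))-p(n-t_r(2j+1))\big]$. The only difference is the final product evaluation: you identify $\psi(-q^r)$ and use Gauss's identity $\psi(q)=f_2^2/f_1$ together with $f_1\mapsto f_2^3/(f_1f_4)$ under $q\mapsto-q$, whereas the paper rewrites the one-sided sum as the bilateral series $\sum_{j=-\infty}^{\infty}(-1)^jq^{2rj^2+rj}$ and applies the Jacobi triple product (Lemma~\ref{lem.jtp}) with $a=-q^{3r}$, $b=-q^r$.
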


\begin{remark}\label{rem.HSS}
	At $r=1$, Theorem~\ref{thmD} reduces to $\sum_n[o_1(n)-o_3(n)]q^n=f_4/f_2=(-q^2;q^2)_\infty=\sum_n q(n)q^{2n}$, where $q(n)$ counts partitions of $n$ into distinct parts. This is Proposition~2.8 of Baruah--Bhoria--Eyyunni--Maji \cite{BBEM2023}, originally proved analytically by Hopkins, Sellers and Yee \cite{HSY2022} and combinatorially by Baruah--Bhoria--Eyyunni--Maji \cite{BBEM2023}.
\end{remark}

\section{Preliminaries}

In this section we collect all the lemmas, together with their proofs, that are used in Section~3.

\begin{lemma}\label{lem.jtp}
	For $|ab|<1$,
	\[
	f(a,b):=\sum_{n=-\infty}^\infty a^{n(n+1)/2}b^{n(n-1)/2}=(-a;ab)_\infty(-b;ab)_\infty(ab;ab)_\infty.
	\]
\end{lemma}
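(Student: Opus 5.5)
The plan is to derive the lemma from the Jacobi triple product in its classical form
\[
\sum_{n=-\infty}^{\infty} x^{n^2} y^{n}=\prod_{m\geq1}(1-x^{2m})(1+x^{2m-1}y)(1+x^{2m-1}y^{-1}),\qquad |x|<1,\ y\neq0.
\]
Since this is standard, it could be quoted. If a self-contained argument is wanted, I would prove it by the usual functional-equation method, described at the end.

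First I rewrite the exponents. We have $\frac{n(n+1)}{2}+\frac{n(n-1)}{2}=n^2$ and $\frac{n(n+1)}{2}-\frac{n(n-1)}{2}=n$. Hence, formally,
\[
a^{n(n+1)/2}b^{n(n-1)/2}=(ab)^{n^2/2}(a/b)^{n/2}.
\]
Choose a square root $x$ of $ab$ and put $y=a/x$, so that $x^2=ab$, $xy=a$ and $x/y=b$. Then $x^{n^2}y^n=a^{n(n+1)/2}b^{n(n-1)/2}$, since $x^{n^2}y^n=(xy)^{n(n+1)/2}(x/y)^{n(n-1)/2}$, which is an exact identity in $x,y$. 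This requires $a\neq0$, and also $b\neq0$ for $y\neq0$.

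With this substitution the products match. The factors $1-x^{2m}$ give $(ab;ab)_\infty$. The factors $1+x^{2m-1}y=1+a(ab)^{m-1}$ give $(-a;ab)_\infty$. The factors $1+x^{2m-1}y^{-1}=1+b(ab)^{m-1}$ give $(-b;ab)_\infty$. The condition $|x|<1$ is exactly $|ab|<1$.

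The degenerate cases $a=0$ or $b=0$ are checked directly. If $b=0$, only the terms $n=0,1$ survive on the left, giving $1+a$. The right side is $(-a;0)_\infty\cdot 1\cdot 1=1+a$. The case $a=0$ is symmetric.

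For the self-contained proof, let $F(y)$ be the product side, viewed as a Laurent series in $y$ on $0<|y|$ for fixed $|x|<1$.
\begin{itemize}
\item[(i)] Shifting the index in the product gives $F(x^2y)=(x y)^{-1}F(y)$.
\item[(ii)] Writing $F(y)=\sum c_n y^n$, this forces $c_n=x^{n^2}c_0$.
\item[(iii)] It remains to show $c_0=1$.
\end{itemize}
Step (iii) is the main obstacle. I would try to pin down $c_0$ first by specializing $y$, for instance to $y=i$ and $y=-1$ or similar values. Failing that, I would compare a Gauss-type evaluation at $x\mapsto x^4$ to obtain $c_0(x)=c_0(x^4)$, and then let the argument tend to $0$ to conclude $c_0=1$.
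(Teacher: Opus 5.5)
Your proposal is correct and takes essentially the paper's approach, since the paper also just cites the classical Jacobi triple product (via Berndt). Your substitution $x^2=ab$, $y=a/x$ (so $xy=a$, $x/y=b$) correctly turns the $\sum x^{n^2}y^n$ form into this $f(a,b)$ form, and your check of the cases $a=0$ or $b=0$ is a detail the paper omits; the optional self-contained sketch is not needed, but if you write it out, the step $c_0(x)=c_0(x^4)$ rests on $\prod_{m\geq1}(1-x^{2m})(1+x^{4m-2})=\prod_{m\geq1}(1-x^{8m})(1-x^{8m-4})^2$, together with continuity of $c_0$ at $x=0$ and $c_0(0)=1$.
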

\begin{proof}
	This is the classical Jacobi triple product identity. See Berndt \cite{Berndt1991}.
\end{proof}
%\begin{lemma}\label{lem.jacobi3}
%	For every positive integer $r$,
%	\[
%	\sum_{m\geq0}(2m+1)(-1)^mq^{t_r(m)}=f_r^3.
%	\]
%\end{lemma}
%\begin{proof}
%	The case $r=1$ is Jacobi's classical identity $\sum_{m\geq0}(2m+1)(-1)^mq^{m(m+1)/2}=(q;q)_\infty^3$. Since $t_r(m)=r\,t_1(m)$, the general case follows from the case $r=1$ by the substitution $q\mapsto q^r$.
%\end{proof}

\begin{lemma}\label{lem.eta}
	For any fixed positive integer $k$, as $y\to0^+$,
	\begin{equation}\label{eq18}
		f_k(e^{-y})=(e^{-ky};e^{-ky})_\infty\ \sim\ \sqrt{\frac{2\pi}{ky}}\,\exp\!\left(-\frac{\pi^2}{6ky}\right).
	\end{equation}
\end{lemma}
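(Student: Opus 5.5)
The plan is to reduce to the case $k=1$ and then invoke the Dedekind eta modular transformation. Since $f_k(e^{-y})=f_1(e^{-ky})$, it suffices to show that, as $t\to0^+$,
\[
(e^{-t};e^{-t})_\infty\sim\sqrt{2\pi/t}\,\exp\!\left(-\pi^2/(6t)\right),
\]
and then to substitute $t=ky$. Note that $t=ky\to0^+$ precisely when $y\to0^+$, because $k$ is a fixed positive integer. The substitution gives $\sqrt{2\pi/(ky)}\exp(-\pi^2/(6ky))$, which is \eqref{eq18}.

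For the case $k=1$, write $\eta(\tau)=e^{\pi i\tau/12}\prod_{n\ge1}(1-e^{2\pi i n\tau})$ and use the classical transformation law $\eta(-1/\tau)=\sqrt{-i\tau}\,\eta(\tau)$. Set $\tau=it/(2\pi)$, so that $e^{2\pi i\tau}=e^{-t}$ and $-1/\tau=2\pi i/t$, whose nome is $e^{-4\pi^2/t}$. Then
\[
(e^{-t};e^{-t})_\infty=e^{t/24}\eta(\tau)=e^{t/24}(t/2\pi)^{-1/2}\eta(2\pi i/t),
\]
and
\[
\eta(2\pi i/t)=e^{-\pi^2/(6t)}\,(e^{-4\pi^2/t};e^{-4\pi^2/t})_\infty .
\]
As $t\to0^+$ we have $e^{t/24}\to1$. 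Also
\[
(e^{-4\pi^2/t};e^{-4\pi^2/t})_\infty\to1,
\]
because its logarithm is bounded in absolute value by a constant multiple of $\sum_n e^{-4\pi^2 n/t}\to0$. Combining these gives the claimed equivalence.

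The only real step is the modular transformation of $\eta$, which I would cite as standard. Alternatively it follows from the Poisson summation argument via Lemma~\ref{lem.jtp} and Euler's pentagonal theorem. If a self-contained proof is preferred, there is a second route that uses only the leading term. Write
\[
\log(e^{-t};e^{-t})_\infty=-\sum_{m\ge1}\frac1m\frac{e^{-mt}}{1-e^{-mt}},
\]
then apply a Mellin transform to get $-\frac{1}{2\pi i}\int\Gamma(s)\zeta(s)\zeta(s+1)t^{-s}\,ds$. Shifting the contour picks up the residues $-\pi^2/(6t)$ at $s=1$ and $\tfrac12\log(2\pi/t)$ at $s=0$, with an $O(t)$ remainder. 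This second route also yields \eqref{eq18}.
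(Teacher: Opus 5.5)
Your proposal is correct and follows the paper's approach: the paper simply cites the modular transformation $\eta(-1/\tau)=\sqrt{-i\tau}\,\eta(\tau)$ (via Koblitz). You additionally spell out the trivial reduction $f_k(e^{-y})=f_1(e^{-ky})$ and the explicit substitution $\tau=it/(2\pi)$, including the check that the transformed product $(e^{-4\pi^2/t};e^{-4\pi^2/t})_\infty$ tends to $1$.
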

\begin{proof}
	This is the standard consequence of the modular transformation law for the Dedekind eta-function $\eta(z)=q^{1/24}(q;q)_\infty$ under $z\mapsto-1/z$; see Koblitz \cite{Koblitz1991}.
\end{proof}

\begin{theorem}[Ingham's Tauberian theorem]\label{thm.ingham}
	Let $C(q)=\sum_{n\geq0}c(n)q^n$ have radius of convergence $1$, where
	$\{c(n)\}$ is a weakly increasing sequence of nonnegative reals, and suppose there are constants $\mu,\nu\in\R$ and $\lambda>0$ with
	\[
	C(e^{-y})\sim\mu y^\nu\exp\!\left(\frac\lambda y\right), \qquad y\to0^+.
	\]
	Then
	\[
	c(n)\sim\frac{\mu}{2\sqrt\pi}\,\lambda^{\frac{2\nu+1}4}n^{-\frac{2\nu+3}4}\exp\!\big(2\sqrt{\lambda n}\big), \qquad \text{as } n\to\infty.
	\]
\end{theorem}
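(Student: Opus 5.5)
The statement immediately above is Ingham's Tauberian theorem, quoted as a tool rather than proved. Our plan is therefore to justify it by reduction to the classical literature, and then to record how it will be applied. We would cite Ingham's original paper, or the form in Bringmann--Jennings-Shaffer--Mahlburg, which states exactly this version. For a hypothesis check, we would sketch the standard argument in three steps.

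\emph{Step 1 (integrated asymptotic).} Monotonicity and nonnegativity of $c(n)$ let one pass from $C(e^{-y})=y\int_0^\infty e^{-yt}S(t)\,dt$, where $S(t)=\sum_{n\le t}c(n)$, to an asymptotic for $S(t)$. This is the genuinely Tauberian part. We would use Karamata-style approximation of the indicator of $[0,1]$ by polynomials in $e^{-t}$, adapted to the exponential scale $\exp(\lambda/y)$, and a saddle-point evaluation with $y\approx\sqrt{\lambda/t}$. The expected result is
\[
S(t)\sim\frac{\mu}{2\sqrt\pi}\lambda^{\frac{2\nu-1}{4}}t^{-\frac{2\nu-1}{4}}\exp\!\big(2\sqrt{\lambda t}\big).
\]

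\emph{Step 2 (differencing).} Monotonicity of $c(n)$ gives
\[
\frac{S(n)-S(n-h)}{h}\le c(n)\le\frac{S(n+h)-S(n)}{h},
\]
and we would take $h=o(\sqrt n)$ with $h\to\infty$. Differentiating the main term produces the factor $\sqrt{\lambda/n}$, which gives the stated formula.

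\emph{Main obstacle.} We expect the main obstacle to be Step 1. Its error terms must be uniform enough that they survive the differencing in Step 2, and this is precisely why weak monotonicity is required. We would not reprove this step but rely on the cited source.

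For the application to Theorem \ref{thmC}, note first that the coefficients of $\mathcal M_r/2\pm\mathcal N_r/2$ themselves need not be monotone. We would therefore apply Theorem \ref{thm.ingham} to the series multiplied by $1/(1-q)$, and then recover $\sigma_{r,o}\mathrm{mex}(n)$ and $\sigma_{r,e}\mathrm{mex}(n)$ by differencing.

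The monotonicity of these partial sums still has to be checked. Nonnegativity of the coefficients, that is $\sigma_{r,o}\mathrm{mex}(n)\ge0$ and $\sigma_{r,e}\mathrm{mex}(n)\ge0$, makes the partial sums weakly increasing, so the hypothesis holds.

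The asymptotics come from Lemma \ref{lem.eta}. It gives $\mathcal N_r(e^{-y})=o(\mathcal M_r(e^{-y}))$ for $r>1$, since the exponent of $\mathcal N_r$ is $\pi^2(1-3/r)/(6y)<\pi^2/(6y)$. Hence both halves behave like $\tfrac12\mathcal M_r(e^{-y})$.
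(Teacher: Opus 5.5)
\textbf{The citation does not cover the statement as written.} Citing the literature is also what the paper does, but your claim that Bringmann--Jennings-Shaffer--Mahlburg state ``exactly this version'' is not correct. Their version assumes $C(e^{-z})\sim\mu z^{\nu}e^{\lambda/z}$ as $z\to0$ in every sector $|\mathrm{Im}\,z|\le\Delta\,\mathrm{Re}\,z$ ($\Delta>0$), not only along the real axis. With only the real-axis hypothesis, the statement as given in the paper is false even for weakly increasing $c(n)$.

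Your outline breaks at Step~1. The real Laplace transform averages against a Gaussian-like window of width $\asymp y^{-3/2}\asymp t^{3/4}$ around $t=\lambda/y^2$. Monotonicity controls the normalized sequence only on scales $o(\sqrt t)$, so oscillations with wavelength between $\sqrt t$ and $t^{3/4}$ are invisible to the real transform.

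Concretely, let $g(n)$ be the right-hand side of the claimed asymptotic. Put $c(n)=g(n)\bigl(1+\tfrac12\cos(n^{1/3})\bigr)$ for $n\ge n_0$, and $c(n)=0$ for $n<n_0$.
\begin{itemize}
\item Monotonicity: $c(n+1)/c(n)\ge e^{\sqrt{\lambda/n}+O(1/n)}\bigl(1-O(n^{-2/3})\bigr)>1$ for large $n$.
\item Real-axis asymptotic: near the saddle $n\approx\lambda/y^2$, the phase $n^{1/3}$ has frequency $\asymp y^{4/3}$, and frequency times window width is $\asymp y^{-1/6}\to\infty$. A contour shift plus Euler--Maclaurin then gives that the oscillating part of $C(e^{-y})$ is $o(y^{\nu}e^{\lambda/y})$.
\item Conclusion fails: all hypotheses hold, yet $c(n)/g(n)$ has no limit. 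For the same reason your Step~1 conclusion for $S(n)$ fails too.
\end{itemize}
Along $z=y+i\tau$ with $\tau\asymp y^{4/3}=o(y)$, which lies in every sector, this oscillation resonates. That is exactly what the sector condition excludes.

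\textbf{What is missing, and two smaller errors.} The missing idea is the sector hypothesis, and the same gap sits in the paper's use of the theorem for Theorem~\ref{thmC}. There it is easy to supply. The modular transformation behind Lemma~\ref{lem.eta} gives $f_k(e^{-z})\sim\sqrt{2\pi/(kz)}\,\exp(-\pi^2/(6kz))$ uniformly as $z\to0$ in any sector. Moreover $\mathrm{Re}(1/z)\ge 1/\bigl((1+\Delta^2)\mathrm{Re}\,z\bigr)$ there, so $\mathcal N_r=o(\mathcal M_r)$ and Lemma~\ref{lem.sum} persist in sectors.

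The two smaller errors:
\begin{itemize}
\item Your integrated asymptotic in Step~1 should carry $t^{-\frac{2\nu+1}{4}}$, not $t^{-\frac{2\nu-1}{4}}$. Formally, apply the formula to $C(q)/(1-q)$, i.e.\ replace $\nu$ by $\nu-1$; for example $\sum_{m\le n}p(m)\sim e^{\pi\sqrt{2n/3}}/(2\pi\sqrt{2n})$. As written, your Step~2 would produce $n^{-\frac{2\nu+1}{4}}$ rather than the stated power.
\item In your application, recovering $\sigma_{r,o}\mathrm{mex}(n)$ from its partial sums by differencing is your own Step~2. That step needs $\sigma_{r,o}\mathrm{mex}(n)$ itself to be weakly increasing, so the detour through $1/(1-q)$ buys nothing. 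This monotonicity is precisely Lemma~\ref{lem.mono1}, contrary to your remark that it may fail.
\end{itemize}
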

\begin{proof}
	See Theorem~1 of Ingham \cite{Ingham1941}.
\end{proof}

%\begin{lemma}[Gaussian sum asymptotic]\label{lem.gauss}
%	For fixed integer $j\geq0$, as $b\to0^+$,
%	\[
%	\sum_{m\geq0}m^je^{-b\,t_r(m)}\ \sim\ \frac12\,\Gamma\!\left(\frac{j+1}2\right)\left(\frac{rb}2\right)^{-\frac{j+1}2}.
%	\]
%\end{lemma}
%\begin{proof}
%	Write $t_r(m)=\frac r2\big((m+\tfrac12)^2-\tfrac14\big)$ and set $\beta=rb/2$. Then
%	\[
%	\sum_{m\geq0}m^je^{-bt_r(m)}=e^{\beta/2}\sum_{m\geq0}m^je^{-\beta(m+1/2)^2}.
%	\]
%	As $\beta\to0^+$, the function $x\mapsto x^je^{-\beta x^2}$ varies slowly on the scale of the unit spacing between consecutive integers (its effective width is $O(\beta^{-1/2})\to\infty$); a standard Riemann-sum (Euler--Maclaurin) comparison, of exactly the type used by Ingham \cite{Ingham1941} for partition-type sums, gives
%	\[
%	\sum_{m\geq0}m^je^{-\beta(m+1/2)^2}=(1+o(1))\int_0^\infty x^je^{-\beta x^2}\,dx=(1+o(1))\cdot\frac12\,\Gamma\!\left(\frac{j+1}2\right)\beta^{-\frac{j+1}2},
%	\]
%	where the last equality follows from the substitution $u=\beta x^2$. This gives the claim.
%\end{proof}

\begin{lemma}\label{lem.mono1}
	For every fixed $r\geq1$, the sequences $\{\sigma_{r,o}\mathrm{mex}(n)\}_{n\geq1}$ and $\{\sigma_{r,e}\mathrm{mex}(n)\}_{n\geq1}$ are weakly increasing.
\end{lemma}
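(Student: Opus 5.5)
The plan is to prove the stronger statement that for every fixed $m\geq1$ the sequence $n\mapsto p_{r\text{-}\mathrm{mex}}(m,n)$ is weakly increasing for $n\geq1$. Since
\[
\sigma_{r,o}\mathrm{mex}(n)=\sum_{m\ \mathrm{odd}} m\,p_{r\text{-}\mathrm{mex}}(m,n), \qquad \sigma_{r,e}\mathrm{mex}(n)=\sum_{m\ \mathrm{even}} m\,p_{r\text{-}\mathrm{mex}}(m,n),
\]
and each of these is a nonnegative combination of the sequences $p_{r\text{-}\mathrm{mex}}(m,\cdot)$, monotonicity of every $p_{r\text{-}\mathrm{mex}}(m,\cdot)$ gives the lemma at once.

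\textbf{Generating function.} A partition has $r\text{-}\mathrm{mex}(\pi)=m$ exactly when each of $1,\dots,m-1$ occurs at least $r$ times and $m$ occurs fewer than $r$ times. Removing $r$ copies of each of $1,\dots,m-1$ is a bijection onto partitions of $n-t_r(m-1)$ in which $m$ occurs fewer than $r$ times. Hence
\[
\sum_{n\geq0}p_{r\text{-}\mathrm{mex}}(m,n)q^n=q^{t_r(m-1)}\frac{1-q^{rm}}{f_1},
\]
which is consistent with Theorem~\ref{prop.gen}. The shift by $q^{t_r(m-1)}$ only prepends zeros, so it preserves weak monotonicity. It therefore suffices to show that the coefficients $c_m(N)$ of $(1-q^{rm})/f_1$ are weakly increasing in $N$.

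\textbf{Case $m\geq2$.} I would give an injection from partitions of $N$ in which $m$ occurs fewer than $r$ times to such partitions of $N+1$: append a part $1$. Since $1\neq m$, the constraint on the multiplicity of $m$ is untouched, and the map is clearly injective.

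\textbf{Case $m=1$.} This is the step I expect to need care, because appending a $1$ can violate the constraint. Here I would factor
\[
\frac{1-q^{r}}{f_1}=\bigl(1+q+\dots+q^{r-1}\bigr)\prod_{i\geq2}\frac1{1-q^i}.
\]
Let $a(N)=p(N)-p(N-1)$ be the number of partitions of $N$ with no part $1$, so $a=1,0,1,1,2,2,4,\dots$. The sequence $a(N)$ is weakly increasing for $N\geq1$: adding $1$ to the largest part injects partitions of $N$ without $1$'s into partitions of $N+1$ without $1$'s. Since $r>1$, we have $c_1(N)=\sum_{j=0}^{r-1}a(N-j)$, and therefore
\[
c_1(N+1)-c_1(N)=a(N+1)-a(N+1-r).
\]
This is nonnegative for every $N$. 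If $N+1-r<0$ the subtracted term is $0$. If $N+1-r\geq1$, use the monotonicity of $a$ on $N\geq1$. The only remaining possibility is $N+1-r=0$, where $a(N+1)\geq a(0)=1$ because $N+1=r\geq2$ and $a(k)\geq1$ for every $k\geq2$.

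\textbf{The case $r=1$.} This case is not needed for Theorem~\ref{thmC}. Here $c_1(N)=a(N)$ is weakly increasing for $N\geq1$, and the shift $t_1(0)=0$ matches the lemma's range $n\geq1$, so the same argument covers it.

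Combining the two cases gives monotonicity of each $p_{r\text{-}\mathrm{mex}}(m,\cdot)$, and hence of both sequences $\{\sigma_{r,o}\mathrm{mex}(n)\}$ and $\{\sigma_{r,e}\mathrm{mex}(n)\}$.
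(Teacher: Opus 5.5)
Your proof is correct, but it takes a partly different route from the paper's. The paper works directly on $\mathcal P_{r,o}(n)$ and $\mathcal P_{r,e}(n)$ for $n\geq1$, using an injection into partitions of $n+1$ that preserves the value of $r\text{-}\mathrm{mex}$. If $r\text{-}\mathrm{mex}(\pi)\neq1$, it adjoins a part $1$; this is exactly your $m\geq2$ map, before you strip the $r$ copies of $1,\dots,m-1$. If $r\text{-}\mathrm{mex}(\pi)=1$, it increases the largest part by $1$, which keeps the multiplicity of $1$ below $r$ even when $\pi=(1^j)$. Because this map preserves $r\text{-}\mathrm{mex}$, it already gives your stronger statement that each $p_{r\text{-}\mathrm{mex}}(m,\cdot)$ is weakly increasing for $n\geq1$.

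The real difference is the case $m=1$. You go through the factorization $(1-q^r)/f_1=(1+q+\dots+q^{r-1})\prod_{i\geq2}(1-q^i)^{-1}$ and the telescoping identity $c_1(N+1)-c_1(N)=a(N+1)-a(N+1-r)$. This forces a three-way split on the sign of $N+1-r$ and a separate treatment of $r=1$. Yet the injection you use to show that $a(N)$ is increasing (raise the largest part by $1$) applies verbatim to partitions of $N\geq1$ with fewer than $r$ ones, so you could have skipped the telescoping entirely.

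The paper's argument is shorter and uniform in $r\geq1$. Yours makes the refined per-$m$ monotonicity, and its link to the generating function $q^{t_r(m-1)}(1-q^{rm})/f_1$, explicit. You were also right to check that the append-a-$1$ map works from $N=0$, which is what makes the shift by $q^{t_r(m-1)}$ harmless when $m\geq2$.
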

%\begin{proof}
%	Define $\Psi:\mathcal P(n)\to\mathcal P(n+1)$ as follows: for $\pi\in\mathcal P(n)$,
%	\begin{enumerate}[(1)]
%		\item if $r\text{-}\mathrm{mex}(\pi)\neq1$, let $\Psi(\pi)$ be obtained from $\pi$ by adjoining a new part equal to $1$;
%		\item if $r\text{-}\mathrm{mex}(\pi)=1$, let $\Psi(\pi)$ be obtained from $\pi$ by increasing the largest part of $\pi$ by $1$.
%	\end{enumerate}
%	As in the proof of Baruah--Talukdar's \cite{BT2026} Lemma~20, $\Psi$ is injective and $r\text{-}\mathrm{mex}(\Psi(\pi))=r\text{-}\mathrm{mex}(\pi)$ for every $\pi\in\mathcal P(n)$. Consequently $\Psi$ restricts to an injection
%	\[
%	\{\pi\in\mathcal P(n): r\text{-}\mathrm{mex}(\pi)\ \mathrm{odd}\}\hookrightarrow\{\pi\in\mathcal P(n+1): r\text{-}\mathrm{mex}(\pi)\ \mathrm{odd}\},
%	\]
%	under which $r\text{-}\mathrm{mex}$ is preserved, so
%	\[
%	\sigma_{r,o}\mathrm{mex}(n)=\!\!\sum_{\substack{\pi\in\mathcal P(n)\\ r\text{-}\mathrm{mex}(\pi)\ \mathrm{odd}}}\!\!r\text{-}\mathrm{mex}(\pi)=\!\!\sum_{\substack{\pi\in\mathcal P(n)\\ r\text{-}\mathrm{mex}(\pi)\ \mathrm{odd}}}\!\!r\text{-}\mathrm{mex}(\Psi(\pi))\ \leq\ \sigma_{r,o}\mathrm{mex}(n+1).
%	\]
%	The identical argument, restricting $\Psi$ to partitions with even $r$-mex, shows $\sigma_{r,e}\mathrm{mex}(n)\leq\sigma_{r,e}\mathrm{mex}(n+1)$.
%\end{proof}
\begin{proof}
	Let $\mathcal P_{r,o}(n)$ (resp.\ $\mathcal P_{r,e}(n)$) denote the set of
	partitions $\pi$ of $n$ with $r\text{-}\mathrm{mex}(\pi)$ odd (resp.\ even).\\
	\smallskip\noindent\textbf{The odd case.}
	Fix $n\geq1$ and define $\Phi:\mathcal P_{r,o}(n)\to\mathcal P_{r,o}(n+1)$ as follows:
	for $\pi\in\mathcal P_{r,o}(n)$
	\begin{enumerate}[(1)]
		\item if $r\text{-}\mathrm{mex}(\pi)\neq1$, let $\Phi(\pi)$ be $\pi$
		with a new part $1$ adjoined.
		\item if $r\text{-}\mathrm{mex}(\pi)=1$, let $\Phi(\pi)$ be $\pi$ with
		its largest part increased by $1$.
	\end{enumerate}
	\begin{itemize}
   \item\emph{$\Phi$ preserves $r\text{-}\mathrm{mex}$}:
	If $r\text{-}\mathrm{mex}(\pi)\neq1$, part $1$ already occurs at least $r$
	times in $\pi$; adjoining one more copy of $1$ does not change whether any
	part occurs $\geq r$ times, so
	$r\text{-}\mathrm{mex}(\Phi(\pi))=r\text{-}\mathrm{mex}(\pi)$.\\
	If $r\text{-}\mathrm{mex}(\pi)=1$, part $1$ occurs fewer than $r$ times in
	$\pi$. Let $m$ be the largest part of $\pi$. Increasing $m$ to $m+1$ leaves
	the multiplicity of part $1$ unchanged if $m\geq2$, and decreases it by one
	(still $<r$) if $m=1$. Either way part $1$ still occurs fewer than $r$
	times in $\Phi(\pi)$, so
	$r\text{-}\mathrm{mex}(\Phi(\pi))=1=r\text{-}\mathrm{mex}(\pi).$
	
 \item 	\emph{$\Phi$ is injective}:
	Under rule~(1), the largest part of $\Phi(\pi)$ equals that of $\pi$.
	Under rule~(2), the largest part of $\Phi(\pi)$ equals $m+1$ and has
	multiplicity exactly $1$, since $\pi$ contains no part equal to $m+1$. Note that if $\pi_1$, $\pi_2$ $\in\mathcal P_{r,o}(n)$ with $\pi_1 \neq\pi_2$, then $\Phi(\pi_1)\neq\Phi(\pi_2)$. Therefore, $\Phi$ is injective. Hence, for $n\geq1$
\end{itemize}
	\[ \sigma_{r,o}\mathrm{mex}(n) = \sum_{\pi\in\mathcal P_{r,o}(n)} r\text{-}\mathrm{mex}(\pi) = \sum_{\pi\in\mathcal P_{r,o}(n)} r\text{-}\mathrm{mex}(\Phi(\pi)) \leq \sum_{\mu\in\mathcal P_{r,o}(n+1)} r\text{-}\mathrm{mex}(\mu) = \sigma_{r,o}\mathrm{mex}(n+1). \]
	\smallskip\noindent\textbf{The even case.}
	For $n\geq1$, define $\Psi:\mathcal P_{r,e}(n)\to\mathcal P_{r,e}(n+1)$ by letting
	$\Psi(\pi)$ be $\pi$ with a new part $1$ adjoined. If
	$r\text{-}\mathrm{mex}(\pi)=k$ is even (so $k\geq2$), part $1$ already
	occurs at least $r$ times in $\pi$, so adjoining one more copy of $1$
	leaves every part's multiplicity status unchanged, hence
	$r\text{-}\mathrm{mex}(\Psi(\pi))=k=r\text{-}\mathrm{mex}(\pi)$.
	As before, $\Psi$ is injective. Therefore,
	\[ \sigma_{r,e}\mathrm{mex}(n) = \sum_{\pi\in\mathcal P_{r,e}(n)} r\text{-}\mathrm{mex}(\pi) = \sum_{\pi\in\mathcal P_{r,e}(n)} r\text{-}\mathrm{mex}(\Psi(\pi)) \leq \sum_{\mu\in\mathcal P_{r,e}(n+1)} r\text{-}\mathrm{mex}(\mu) = \sigma_{r,e}\mathrm{mex}(n+1). \]
	Therefore, $\{\sigma_{r,o}\mathrm{mex}(n)\}_{n\geq1}$ and
	$\{\sigma_{r,e}\mathrm{mex}(n)\}_{n\geq1}$ are weakly increasing.
\end{proof}

%\begin{lemma}\label{lem.mono2}
%	For every fixed $k\geq1$, the sequence $\{\sigma^{(k)}_r\mathrm{mex}(n)\}_{n\geq0}$ is weakly increasing.
%\end{lemma}
%\begin{proof}
%	With $\Psi$ as in Lemma~\ref{lem.mono1}, $r\text{-}\mathrm{mex}(\Psi(\pi))=r\text{-}\mathrm{mex}(\pi)$ exactly, so $\big(r\text{-}\mathrm{mex}(\Psi(\pi))\big)^k=\big(r\text{-}\mathrm{mex}(\pi)\big)^k$ for every $k$. Since $\Psi$ is injective, every term contributing to $\sigma^{(k)}_r\mathrm{mex}(n)$ reappears, unchanged, among the nonnegative terms of $\sigma^{(k)}_r\mathrm{mex}(n+1)$; hence $\sigma^{(k)}_r\mathrm{mex}(n)\leq\sigma^{(k)}_r\mathrm{mex}(n+1)$.
%\end{proof}

\begin{lemma}\label{lem.M}
	As $y\to0^+$, we have
	\[
		\mathcal{M}_{r}(e^{-y})\ \sim\ \frac1{2\sqrt r}\exp\!\left(\frac{\pi^2}{6y}\right).
	\]
\end{lemma}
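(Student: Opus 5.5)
The plan is to write $\mathcal M_r(e^{-y})$ as a product of eta-type factors and apply Lemma~\ref{lem.eta} to each factor separately. Since \eqref{talukdar4} gives
\[
\mathcal M_r(q)=\frac{f_{2r}^2}{f_1f_r},
\]
and Lemma~\ref{lem.eta} gives an asymptotic equivalence for each $f_k(e^{-y})$ with $k$ fixed, multiplicativity of $\sim$ lets us combine the three equivalences (for $k=2r$, $k=1$ and $k=r$) into one.

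The first step is the prefactors. By Lemma~\ref{lem.eta},
\[
f_{2r}(e^{-y})^2\sim \frac{2\pi}{2ry}\exp\!\Big(-\frac{\pi^2}{6ry}\Big),\qquad
f_1(e^{-y})f_r(e^{-y})\sim \frac{2\pi}{\sqrt r\,y}\exp\!\Big(-\frac{\pi^2}{6y}-\frac{\pi^2}{6ry}\Big).
\]
Dividing the first by the second, the algebraic factor becomes
\[
\frac{\pi/(ry)}{2\pi/(\sqrt r\,y)}=\frac{1}{2\sqrt r}.
\]

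The second step is the exponentials. The exponent of the quotient is
\[
-\frac{\pi^2}{6ry}+\frac{\pi^2}{6y}+\frac{\pi^2}{6ry}=\frac{\pi^2}{6y},
\]
because the two $r$-dependent terms cancel exactly. Together with the prefactor this gives the stated $\frac{1}{2\sqrt r}\exp\!\big(\frac{\pi^2}{6y}\big)$.

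The only point needing care is that asymptotic equivalence of the factors must carry over to the quotient, including the exponential parts. This is fine here: Lemma~\ref{lem.eta} is an asymptotic equivalence, so each ratio $f_k(e^{-y})\big/\big[\sqrt{2\pi/(ky)}\,e^{-\pi^2/(6ky)}\big]$ tends to $1$. A product or quotient of finitely many such ratios still tends to $1$, and no cancellation of leading terms occurs because everything is multiplicative. If the exact modular form is preferred, one can instead use $f_k(e^{-y})=\sqrt{2\pi/(ky)}\,e^{-\pi^2/(6ky)+ky/24}\,(1+O(e^{-4\pi^2/(ky)}))$. In that version the factors $e^{ky/24}$ also tend to $1$, giving the same conclusion.
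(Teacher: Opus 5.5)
Your proposal is correct and follows the paper's proof essentially verbatim: you apply Lemma~\ref{lem.eta} to $f_{2r}^2$, $f_1$ and $f_r$, the prefactors combine to $\frac{1}{2\sqrt r}$, and the $r$-dependent exponentials cancel to leave $\exp\!\big(\frac{\pi^2}{6y}\big)$. Your extra remark that asymptotic equivalence passes to finite products and quotients makes explicit a step the paper uses without comment.
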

\begin{proof}
	By \eqref{eq18},
\[ f_{2r}(e^{-y})^2\sim\frac\pi{ry}\exp\!\left(-\frac{\pi^2}{6ry}\right), \qquad f_1(e^{-y})f_r(e^{-y})\sim\frac{2\pi}{\sqrt r\,y}\exp\!\left(-\frac{\pi^2}{6y}\Big(1+\frac1r\Big)\right). \]
	Hence,
	\[
	\mathcal{M}_{r}(e^{-y})=\frac{f_{2r}(e^{-y})^2}{f_1(e^{-y})f_r(e^{-y})}\sim\frac1{2\sqrt r}\exp\!\left(\frac{\pi^2}{6y}\Big(1+\frac1r-\frac1r\Big)\right)=\frac1{2\sqrt r}\exp\!\left(\frac{\pi^2}{6y}\right).
	\]
\end{proof}
\begin{lemma}\label{lem.N}
	As $y\to0^+$,
	\[
\mathcal{N}_r(e^{-y})\ \sim\ \frac{2\pi}{r^{3/2}y}\exp\!\left(\pi^2y^{-1}\Big(\frac16-\frac1{2r}\Big)\right).
	\]
	Consequently, for every fixed $r\geq1$,
	\[
\mathcal{N}_r(e^{-y})=o\big(M_r(e^{-y})\big).
	\]
\end{lemma}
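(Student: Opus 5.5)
The plan is to apply Lemma~\ref{lem.eta} directly to each of the three factors of $\mathcal N_r(e^{-y})=f_r(e^{-y})^3/f_1(e^{-y})$. Then we compare the resulting exponential rate with the one in Lemma~\ref{lem.M}. Since $r$ is fixed and each asymptotic in \eqref{eq18} is a genuine equivalence as $y\to0^+$, products and quotients of finitely many such equivalences are again equivalences. No uniformity issues arise.

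First, with $k=r$ in \eqref{eq18}, cube the equivalence:
\[
f_r(e^{-y})^3\sim\Big(\frac{2\pi}{ry}\Big)^{3/2}\exp\!\Big(-\frac{\pi^2}{2ry}\Big).
\]
Next, with $k=1$,
\[
f_1(e^{-y})\sim\Big(\frac{2\pi}{y}\Big)^{1/2}\exp\!\Big(-\frac{\pi^2}{6y}\Big).
\]
Dividing the first by the second, the prefactor becomes $(2\pi)^{3/2-1/2}\,r^{-3/2}\,y^{-3/2+1/2}=\frac{2\pi}{r^{3/2}y}$. The exponent becomes $\pi^2y^{-1}\big(\frac16-\frac1{2r}\big)$, which is the first claim.

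For the second claim, take the quotient with Lemma~\ref{lem.M}:
\[
\frac{\mathcal N_r(e^{-y})}{\mathcal M_r(e^{-y})}\sim\frac{4\pi}{r\,y}\exp\!\Big(-\frac{\pi^2}{2ry}\Big).
\]
Because $r\ge1$ is fixed, $\frac{\pi^2}{2r}>0$. The factor $\exp(-c/y)$ with $c>0$ decays faster than any power of $y$, so the quotient tends to $0$ as $y\to0^+$. This gives $\mathcal N_r(e^{-y})=o(\mathcal M_r(e^{-y}))$. Here $M_r$ in the statement is read as $\mathcal M_r$.

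There is no real obstacle; the only point needing care is the exponent bookkeeping. Note also that for $r\le2$ the exponent of $\mathcal N_r$ is nonpositive, and for $r=3$ it equals $0$. The $o(\cdot)$ conclusion still holds in every case, because it depends only on the difference of exponents, namely $-\frac{\pi^2}{2ry}$.
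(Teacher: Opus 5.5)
Your proposal is correct and follows essentially the same route as the paper: apply \eqref{eq18} to $f_r^3$ and to $f_1$, take the quotient, and compare the result with Lemma~\ref{lem.M}. Your explicit ratio $\mathcal N_r(e^{-y})/\mathcal M_r(e^{-y})\sim\frac{4\pi}{ry}\exp\!\big(-\frac{\pi^2}{2ry}\big)\to0$ states precisely the ``compare the exponential rates'' step that the paper only describes in words.
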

\begin{proof}
	From equation \eqref{talukdar5}, $\mathcal{N}_r(q)=f_r^3/f_1$. By \eqref{eq18},
	\[
	f_r(e^{-y})^3\sim\left(\frac{2\pi}{ry}\right)^{3/2}\exp\!\left(-\frac{\pi^2}{2ry}\right), \qquad
	f_1(e^{-y})\sim\sqrt{\frac{2\pi}y}\exp\!\left(-\frac{\pi^2}{6y}\right).
	\]
So $\mathcal{N}_r(e^{-y})=\frac{f_r(e^{-y})^3}{f_1(e^{-y})}\sim\frac{2\pi}{r^{3/2}y}\exp\!\left(\pi^2y^{-1}\Big(\frac16-\frac1{2r}\Big)\right)$.\\ Since $r\geq1$, $\frac1{2r}>0$, so the exponential rate $\frac{\pi^2}6-\frac{\pi^2}{2r}$ is strictly smaller than the rate $\frac{\pi^2}6$ of Lemma~\ref{lem.M}. Comparing the two asymptotics as $y\to0^+$ gives $N_r(e^{-y})=o(M_r(e^{-y}))$.
\end{proof}

\begin{lemma}\label{lem.sum}
	As $y\to0^+$, we have
	\[
	\sum_{n\geq0}\sigma_{r,o}\mathrm{mex}(n)e^{-ny}\ \sim\ \sum_{n\geq0}\sigma_{r,e}\mathrm{mex}(n)e^{-ny}\ \sim\ \frac1{4\sqrt r}\exp\!\left(\frac{\pi^2}{6y}\right).
	\]
\end{lemma}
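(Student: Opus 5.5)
We work directly from the generating functions \eqref{talukdar2} and \eqref{talukdar3}. Put $q=e^{-y}$ with $y>0$. Then
\[
\sum_{n\geq0}\sigma_{r,o}\mathrm{mex}(n)e^{-ny}=\tfrac12\big(\mathcal M_r(e^{-y})+\mathcal N_r(e^{-y})\big),\qquad
\sum_{n\geq0}\sigma_{r,e}\mathrm{mex}(n)e^{-ny}=\tfrac12\big(\mathcal M_r(e^{-y})-\mathcal N_r(e^{-y})\big).
\]
These identities hold for $0<y$ because both products converge absolutely for $|q|<1$.

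Factor out $\mathcal M_r(e^{-y})$. This gives
\[
\tfrac12\,\mathcal M_r(e^{-y})\Big(1\pm \frac{\mathcal N_r(e^{-y})}{\mathcal M_r(e^{-y})}\Big).
\]
Lemma~\ref{lem.N} gives $\mathcal N_r(e^{-y})=o(\mathcal M_r(e^{-y}))$ as $y\to0^+$, so the bracket tends to $1$ in both cases. Both sums are therefore asymptotic to $\frac12\mathcal M_r(e^{-y})$.

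Now insert Lemma~\ref{lem.M}, which says $\mathcal M_r(e^{-y})\sim\frac{1}{2\sqrt r}\exp(\pi^2/(6y))$. Both sums are then $\sim\frac1{4\sqrt r}\exp(\pi^2/(6y))$. Since $\sim$ is transitive, this also yields the middle equivalence between the odd and even sums.

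The one point that needs checking is that the ratio $\mathcal N_r/\mathcal M_r$ really tends to $0$. We verify it directly. By Lemmas~\ref{lem.M} and~\ref{lem.N}, the ratio is asymptotic to
\[
\frac{4\pi}{r\,y}\exp\!\Big(-\frac{\pi^2}{2ry}\Big).
\]
The exponential decay beats the polynomial factor $y^{-1}$, so the ratio tends to $0$ for every fixed $r\geq1$. No other obstacle arises. In particular, the subtraction in the even case causes no cancellation issue, because we have exact identities together with a ratio tending to zero.
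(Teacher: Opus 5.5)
Your proposal is correct and follows the same route as the paper. You write the two sums as $\tfrac12(\mathcal M_r\pm\mathcal N_r)$ via \eqref{talukdar2}--\eqref{talukdar3}, use Lemma~\ref{lem.N} to get $\mathcal M_r\pm\mathcal N_r\sim\mathcal M_r$, and finish with Lemma~\ref{lem.M}; your explicit check that $\mathcal N_r/\mathcal M_r\sim\frac{4\pi}{ry}\exp\!\big(-\frac{\pi^2}{2ry}\big)\to0$ is a correct and slightly more detailed version of the comparison the paper makes in Lemma~\ref{lem.N}.
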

   \begin{proof}
	By Lemma~\ref{lem.N}, $M_r(e^{-y})\pm N_r(e^{-y})\sim M_r(e^{-y})$ as $y\to0^+$. Therefore from \eqref{talukdar2}, \eqref{talukdar3} we obtain the desired result.
\end{proof}
                 \section{Proofs}
\begin{proof}[Proof of Proposition~\ref{prop.gen}]	
 From \cite{BT2026}, we have 
	\begin{align*}
		M(z,q)&:=\sum_{n\geq0}\sum_{m\geq1}p_{r\text{-}\mathrm{mex}}(m,n)z^mq^n\\
		&=\frac1{f_1}\sum_{m\geq1}z^mq^{t_r(m-1)}(1-q^{rm}).
	\end{align*}
Splitting the sum on the right and re-indexing with $j=m-1$,
	\begin{align*}
		\sum_{m\geq1}z^mq^{t_r(m-1)}(1-q^{rm})
		&=z\sum_{j\geq0}z^jq^{t_r(j)}-\sum_{m\geq1}z^mq^{t_r(m)}\\
		&=z\sum_{j\geq0}z^jq^{t_r(j)}-\left(\sum_{m\geq0}z^mq^{t_r(m)}-1\right)\\
		&=1+(z-1)\sum_{m\geq0}z^mq^{t_r(m)}.
	\end{align*}
	Hence
	\begin{align*}
	\sum_{n\geq0}\sum_{m\geq1}p_{r\text{-}\mathrm{mex}}(m,n)z^mq^n
		&=\sum_{n\geq0}p(n)q^n+(z-1)\sum_{m\geq0}z^m\sum_{n\geq0}p\big(n-t_r(m)\big)q^n.
	\end{align*}
Now, comparing coefficients of $q^n$ on both sides gives the claim.
\end{proof}

\begin{proof}[Proof of Theorem~\ref{thm.Dz}]
	The operator $D_z:=z\partial_z$ satisfies $D_z^k(z^m)=m^kz^m$
	for every $m\geq0$. 
	The right-hand side is a finite sum, since $p(n-t_r(m))=0$ once $t_r(m)>n$. The left-hand side is likewise finite, since $r\text{-}\mathrm{mex}(\pi)\leq n+1$ for every $\pi\in\mathcal P(n)$. Termwise differentiation is therefore justified, and applying $D_z^k$ to both sides of Proposition~\ref{prop.gen} gives
	\[
	\sum_{m\geq1}p_{r\text{-}\mathrm{mex}}(m,n)D_z^k(z^m)=\sum_{m\geq0}p\big(n-t_r(m)\big)D_z^k(z^{m+1}-z^m),
	\]
	which is exactly the claimed identity.
\end{proof}

\begin{proof}[Proof of Theorem~\ref{thmA}]
	We set $z=1$ and $z=-1$ in Theorem~\ref{thm.Dz}.

	At $z=1$,	$D_z^k(z^m)=m^k$, so the left-hand side Theorem~\ref{thm.Dz} becomes
	\[
	\sum_{m\geq1}p_{r\text{-}\mathrm{mex}}(m,n)m^k=\sigma^{(k)}_r\mathrm{mex}(n),
	\]
	which gives the first identity.

	At $z=-1$, $D_z^k(z^m)=m^k(-1)^m$.
	Since $(-1)^m=-(-1)^{m-1}$, the left-hand side of Theorem~\ref{thm.Dz} becomes
	\[
	\sum_{m\geq1}p_{r\text{-}\mathrm{mex}}(m,n)m^k(-1)^m=-\overline\sigma^{(k)}_r\mathrm{mex}(n).
	\]
	On the right-hand side,
	\[
	(m+1)^k(-1)^{m+1}-m^k(-1)^m=-(-1)^m\big[(m+1)^k+m^k\big].
	\]
	Dividing both sides by $-1$ gives the second identity.
\end{proof}

\begin{proof}[Proof of Corollary~\ref{coro.k1}]
	Putting $k=1$ in Theorem~\ref{thmA}, the first identity is immediate.
	For the second, since
	$(-1)^m\big[(m+1)+m\big]=(-1)^m(2m+1)$,
	we obtain the stated formula for $\overline\sigma_r\mathrm{mex}(n)$.
\end{proof}

\begin{proof}[Proof of Theorem~\ref{thm.split}]
	Adding the two identities of Theorem~\ref{thmA} gives
	\begin{equation}\label{eq1.5.1}
	\sigma^{(k)}_r\mathrm{mex}(n)+\overline\sigma^{(k)}_r\mathrm{mex}(n)=\sum_{m\geq0}\Big[\big(1+(-1)^m\big)(m+1)^k-\big(1-(-1)^m\big)m^k\Big]p\big(n-t_r(m)\big).
\end{equation}
	The left-hand side equals
	\begin{align*}
		\sum_{\pi\in\mathcal P(n)}\Big[\big(r\text{-}\mathrm{mex}(\pi)\big)^k-(-1)^{r\text{-}\mathrm{mex}(\pi)}\big(r\text{-}\mathrm{mex}(\pi)\big)^k\Big]
		&=2\sum_{\substack{\pi\in\mathcal P(n)\\ r\text{-}\mathrm{mex}(\pi)\ \mathrm{odd}}}\big(r\text{-}\mathrm{mex}(\pi)\big)^k\\
		&=2\sigma^{(k)}_{r,o}\mathrm{mex}(n).
	\end{align*}
 By substituting the above expression in \eqref{eq1.5.1}, we get
  \begin{equation*}
  	\sum_{m\geq0}\Big[\big(1+(-1)^m\big)(m+1)^k-\big(1-(-1)^m\big)m^k\Big]p\big(n-t_r(m)\big) =2\sigma^{(k)}_{r,o}\mathrm{mex}(n).
  \end{equation*}
Now dividing by $2$ and writing $\epsilon_m=\frac{1+(-1)^m}2$ gives \eqref{eq14}.

	Subtracting the two identities of Theorem~\ref{thmA} instead of adding, and using
	\[
	\sigma^{(k)}_r\mathrm{mex}(n)-\overline\sigma^{(k)}_r\mathrm{mex}(n)=2\sigma^{(k)}_{r,e}\mathrm{mex}(n),
	\]
	gives \eqref{eq15} by the identical argument.
\end{proof}

\begin{proof}[Proof of Corollary~\ref{coro.k1split}]
	Put $k=1$ in \eqref{eq14}. Since $\epsilon_m=1$ for $m$ even and $\epsilon_m=0$ for $m$ odd,
	\[
	\sigma_{r,o}\mathrm{mex}(n)=\sum_{m\ \mathrm{even}}(m+1)\,p\big(n-t_r(m)\big)-\sum_{m\ \mathrm{odd}}m\,p\big(n-t_r(m)\big).
	\]
	Setting $m=2j$ in the first sum and $m=2j+1$ in the second, and combining, gives
	\[
	\sigma_{r,o}\mathrm{mex}(n)=\sum_{m\geq0}(2m+1)\Big[p\big(n-t_r(2m)\big)-p\big(n-t_r(2m+1)\big)\Big].
	\]
	Similarly, putting $k=1$ in \eqref{eq15},
	\[
	\sigma_{r,e}\mathrm{mex}(n)=\sum_{m\ \mathrm{odd}}(m+1)\,p\big(n-t_r(m)\big)-\sum_{m\ \mathrm{even}}m\,p\big(n-t_r(m)\big).
	\]
	Setting $m=2j+1$ in the first sum and $m=2j+2$ in the second gives
	\[
	\sigma_{r,e}\mathrm{mex}(n)=\sum_{m\geq0}(2m+2)\Big[p\big(n-t_r(2m+1)\big)-p\big(n-t_r(2m+2)\big)\Big].
	\]
\end{proof}
\begin{proof}[Proof of Theorem~\ref{thmC}]
	By Lemma~\ref{lem.mono1}, both $\{\sigma_{r,o}\mathrm{mex}(n)\}$ and $\{\sigma_{r,e}\mathrm{mex}(n)\}$ are weakly increasing sequences of positive integers, and from equations \eqref{talukdar2}, \eqref{talukdar3}  their generating functions have nonnegative coefficients and radius of convergence $1$. By Lemma~\ref{lem.sum}, both satisfy the hypothesis of Theorem~\ref{thm.ingham} with
		$\mu=\frac1{4\sqrt r}, \nu=0,\lambda=\frac{\pi^2}6.$\\
	Applying Theorem~\ref{thm.ingham}, for $\sigma_{r,o}\mathrm{mex}(n),\sigma_{r,e}\mathrm{mex}(n)$,
	\[
	\sigma_{r,o}\mathrm{mex}(n)\ \sim\ \sigma_{r,e}\mathrm{mex}(n)\ \sim\frac\mu{2\sqrt\pi}\,\lambda^{1/4}n^{-3/4}e^{2\sqrt{\lambda n}}.
	\]
	Here $2\sqrt{\lambda n}=\pi\sqrt{2n/3}$.
	For the constant,
	\begin{align*}
		\frac\mu{2\sqrt\pi}\lambda^{1/4}
		&=\frac1{4\sqrt r}\cdot\frac1{2\sqrt\pi}\cdot\left(\frac{\pi^2}6\right)^{1/4}\\
		&=\frac1{8\sqrt r}\cdot\frac{\pi^{1/2}}{\sqrt\pi\,6^{1/4}}\\
		&=\frac1{8\sqrt r\,6^{1/4}}\\
		&=\frac18\big(6r^2\big)^{-1/4}.
	\end{align*}
	Hence
	\begin{align*}
		c(n)\sigma_{r,o}\mathrm{mex}(n)\ \sim\ \sigma_{r,e}\mathrm{mex}(n)\ \sim \frac18\big(6n^3r^2\big)^{-1/4}\exp\!\left(\pi\sqrt{\frac{2n}3}\right),
	\end{align*} which is the stated formula.
\end{proof}

\begin{proof}[Proof of Theorem~\ref{thmD}]
	By Proposition~\ref{prop.gen} applied at $-z$ in place of $z$ and subtracted from the identity at $z$, we obtain, for fixed $n\geq0$,
	\[
	\sum_{m\geq1}p_{r\text{-}\mathrm{mex}}(m,n)\big[z^m-(-z)^m\big]=(z-1)\sum_{m\geq0}p\big(n-t_r(m)\big)z^m-(-z-1)\sum_{m\geq0}p\big(n-t_r(m)\big)(-z)^m.
	\]
	The even-$m$ terms on the left cancel, so the left-hand side equals
	\[
	2\sum_{m\ \mathrm{odd}}p_{r\text{-}\mathrm{mex}}(m,n)z^m.
	\]
	On the right-hand side, collecting the coefficient of $p(n-t_r(m))z^m$ gives $(z-1)+(z+1)(-1)^m$, which equals $2z$ if $m$ is even and $-2$ if $m$ is odd. Hence
	\[
	2\sum_{m\ \mathrm{odd}}p_{r\text{-}\mathrm{mex}}(m,n)z^m=2z\sum_{m\ \mathrm{even}}p\big(n-t_r(m)\big)z^m-2\sum_{m\ \mathrm{odd}}p\big(n-t_r(m)\big)z^m.
	\]
	Writing $m=2j+1$ on the left and dividing by $2$ gives
	\[
	\sum_{j\geq0}p_{r\text{-}\mathrm{mex}}(2j+1,n)z^{2j+1}=z\sum_{j\geq0}p\big(n-t_r(2j)\big)z^{2j}-\sum_{j\geq0}p\big(n-t_r(2j+1)\big)z^{2j+1}.
	\]
	Since $i^{2j}=(-1)^j$ and $i^{2j+1}=i(-1)^j$, setting $z=i$ and dividing by $i$ yields
	\[
	\sum_{j\geq0}(-1)^jp_{r\text{-}\mathrm{mex}}(2j+1,n)=\sum_{j\geq0}(-1)^j\Big[p\big(n-t_r(2j)\big)-p\big(n-t_r(2j+1)\big)\Big].
	\]
	Since $2j+1\equiv1\pmod4$ when $j$ is even and $2j+1\equiv3\pmod4$ when $j$ is odd, the left-hand side is exactly $o_1^{(r)}(n)-o_3^{(r)}(n)$.
	Using
	  $	\sum_{n\geq0}p(n-c)q^n=\frac{q^c}{f_1}$,
	multiplying both sides by $q^n$ and summing over $n\geq0$ gives
	\[
	\sum_{n\geq0}\Big(o_1^{(r)}(n)-o_3^{(r)}(n)\Big)q^n=\frac1{f_1}\sum_{j\geq0}(-1)^j\Big[q^{t_r(2j)}-q^{t_r(2j+1)}\Big].
	\]
	Writing $Q(j):=rj(2j+1)$, one checks that
	\[
	Q(j)=t_r(2j) \qquad \text{and} \qquad Q(-(j+1))=t_r(2j+1),
	\]
	with matching alternating signs, so
	\[
	\sum_{j\geq0}(-1)^j\Big[q^{t_r(2j)}-q^{t_r(2j+1)}\Big]=\sum_{j=-\infty}^\infty(-1)^jq^{2rj^2+rj}.
	\]
	By Jacobi's triple product identity (Lemma~\ref{lem.jtp}), taking $a=-q^{3r}$ and $b=-q^r$, so that $ab=q^{4r}$, we get
	\begin{align*}
		\sum_{j=-\infty}^\infty(-1)^jq^{2rj^2+rj}
		&=(q^{3r};q^{4r})_\infty(q^r;q^{4r})_\infty(q^{4r};q^{4r})_\infty\\
		&=\frac{f_r f_{4r}}{f_{2r}}.
	\end{align*}
	Dividing by $f_1$ completes the proof.
\end{proof}

\medskip
\noindent{\bf Data availability statement:} There is no data associated to our manuscript.

\end{document}